%
%
%
%
\documentclass{amsart}
\usepackage{mathrsfs}
\usepackage{amssymb}
\usepackage{amsfonts}

\newtheorem{thm}{Theorem}[section]
\newtheorem{lemma}[thm]{Lemma}
\newtheorem{prop}[thm]{Proposition}
\newtheorem{cor}[thm]{Corollary}

\newtheorem{conjecture}[thm]{Conjecture}

\newtheorem{remark}[thm]{Remark}

\numberwithin{equation}{section}



\begin{document}

\title{A note on static spaces and related problems}

\author{Jie Qing}
\address{(Jie Qing) Department of Mathematics, University of California, Santa Cruz, CA 95064}
\email{qing@ucsc.edu}

\author{Wei Yuan}
\address{(Wei Yuan) Department of Mathematics, University of California, Santa Cruz, CA 95064}
\email{wyuan2@ucsc.edu}




\keywords{Static space-times, static spaces, critical point equations, Bach flat, warped metrics}

\begin{abstract}
In this paper we study static spaces introduced in \cite{H-E, K-O, F-M, Kobayashi, Corvino} and Riemannian manifolds possessing solutions to the critical point equation
introduced in \cite{Besse, Huang, Chang_Huang, Yun_Huang}. In both cases on the manifolds there is a function $f$ satisfying the equation
$$
fRic = \nabla^2 f + \Phi g.
$$
With a similar idea used in \cite{C-C2, C-C1}, we have made progress in solving the classifying problem raised in \cite{F-M} of vacuum static spaces and in proving the conjecture made in \cite{Besse} about manifolds admitting solutions to the critical point equation in general dimensions. We obtain even stronger results in dimension 3. 
\end{abstract}

\maketitle



\section{Introduction}
 
Static space-times are the special and important global solutions to Einstein equations in general relativity. In this paper we are concerned with 
static space-times that carry a perfect fluid matter field as introduced in \cite{H-E} \cite{K-O}. One may include a cosmological constant 
to maintain mass-energy density to be nonnegative. A static space-time metric $\hat g = - f^2 dt^2 + g$ satisfies the Einstein equation
\begin{equation}\label{equ:einstein}
\hat{Ric} - \frac{1}{2}\hat{R} \hat{g} + \Lambda \hat g = - 8 \pi GT,
\end{equation}
for the energy-momentum-stress tensor $T = - \mu f^2 dt^2 - p g$ of a perfect fluid,
where $\mu$ and $p$ are nonnegative, time-indepdendent mass-energy density and pressure of the perfect fluid respectively. \\

A complete Riemannian manifold $(M^n,\ g)$ is said to be a static space (with perfect fluid) if there exists a
smooth function $f$ ($\not\equiv 0$) on $M^n$ such that $f$ solves the following static equation:
\begin{equation}\label{equ:static-1}
\nabla ^2 f - (Ric - \frac R{n-1}g)f - \frac{1}{n}( \frac R{n-1} f + \Delta f )g = 0.
\end{equation}
Particularly, $(M^n, \ g)$ is said to be a vacuum static space if \eqref{equ:static-1} reduces to
\begin{equation}\label{equ:vacuum-static-1}
\nabla^2 f -  (Ric - \frac R{n-1}g)f = 0.
\end{equation}

It is very interesting to notice that the vacuum static equation \eqref{equ:vacuum-static-1} are also considered by Fischer and Marsden \cite{F-M} in 
their study of the surjectivity of scalar curvature function from the space of Riemannian metrics (cf. \cite{Kobayashi, Shen, Corvino}).  \\

For static spaces, in \cite{K-O}, Kobayashi and Obata (cf. \cite{Lindblom} for $n=3$) showed that, nearby the hypersurface $f^{-1}(c)$ for a regular value $c$, 
a static metric $g$ is isometric to a warped metric of a constant curvature metric,  provided that $g$ is locally conformally flat. 
In \cite{F-M}, Fischer and Marsden raised the possibility of identifying all 
compact vacuum static spaces. In fact one now knows in dimension 3, besides flat tori $T^3$ and round spheres
$S^3$, $S^1\times S^2$ is also a compact vacuum static space. Later, in \cite{ejiri}, other warped metrics on $S^1\times_r S^2$ were found to be vacuum static. 
The open conjecture is that
those, possibly moduli some finite group, are all the compact vacuum static spaces. Please refer to \cite{Kobayashi, Shen, Lafontaine_1, Lafontaine} for progresses made 
in solving the classifying problem raised in \cite{F-M}. In short the classifying problem is solved \cite{Kobayashi, Lafontaine} for locally conformally flat static spaces.
But an easy calculation shows that $S^1({\frac 1{\sqrt{n-2}}})\times E^{n-1}$ for Einstein manifolds $E^{n-1}$
with scalar curvature $(n-1)(n-2)$ are compact vacuum static spaces, which are not locally conformally flat and therefore not accounted in \cite{Kobayashi}, when $n > 3$.\\

The critical point equation is introduced for the Hilbert-Einstein action on the space of conformal classes represented by 
Riemannian metrics with unit volume and constant Ricci scalar curvature in \cite{Besse} in an attempt to more efficiently identify Einstein metrics in two steps. 
Formally the Euler-Lagrangian equation of Hilbert-Einstein action on the space of  Riemannian metrics with unit volume and constant Ricci 
scalar curvature is 
$$
Ric - \frac 1n Rg = \nabla^2 f - (Ric - \frac  1{n-1} Rg)f.
$$
It may look more apparent  that it is related to the static equations \eqref{equ:static-1} and \eqref{equ:vacuum-static-1} if we replace $f$ by $f-1$ and consider the equation
\begin{equation}\label{equ:cpe}
\nabla^2 f - (Ric - \frac 1{n-1}Rg)f - \frac 1{n(n-1)}Rg = 0.
\end{equation}

A complete Riemannian manifold $(M^n, \ g)$ ($n\geq 3$) of constant Ricci scalar curvature is said to be CPE
if it admits a smooth solution $f$ ($\not\equiv 0$) to the critical point equation \eqref{equ:cpe} (cf. \cite{Besse, Huang, Chang_Huang, Yun_Huang}). In \cite{Besse} 
it conjectured that  a CPE metric is always Einstein.

\begin{conjecture} \label{besse} A CPE metric is always Einstein.
\end{conjecture}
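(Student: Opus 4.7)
The plan is to combine a weighted Bochner-type integral identity with a Cheeger--Colding-style level-set analysis, pushing the strategy of the present paper (which succeeds under Bach-flat or three-dimensional hypotheses) to the unconditional setting. Let $(M^{n},g,f)$ be a CPE triple with constant scalar curvature $R$. Tracing \eqref{equ:cpe} yields $\Delta f = \tfrac{R}{n-1}(1-f)$, and subtracting the trace rewrites \eqref{equ:cpe} as
\[
f\,\mathring{Ric} \; = \; \mathring{\nabla^{2} f},
\]
where $\mathring{T}$ denotes the trace-free part of a symmetric $2$-tensor. The target is $\mathring{Ric}\equiv 0$, which is equivalent to $g$ being Einstein; on a compact $M$, the maximum principle applied to $\Delta f = \tfrac{R}{n-1}(1-f)$ already forces the nodal set $\{f=1\}$ to be nonempty, so one has a natural reference set on which to perform the analysis.

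The first substantive step is to extract an integral identity by differentiating \eqref{equ:cpe}, using $\mathrm{div}\,\mathring{Ric}=0$ (since $R$ is constant), and commuting covariant derivatives. This produces a formula of the schematic form
\[
\tfrac{1}{2}\,\mathrm{div}\!\bigl(f\,\nabla |\mathring{Ric}|^{2}\bigr) \; = \; f\,|\nabla\mathring{Ric}|^{2} + f\,\langle W,\mathring{Ric}\otimes\mathring{Ric}\rangle + \text{(lower order in }\mathring{Ric}\text{)},
\]
where $W$ is the Weyl tensor. Integrating over $M$ annihilates the left-hand side, so that, provided the Weyl-coupling term can be absorbed, one obtains $\nabla\mathring{Ric}\equiv 0$ on $\{f\neq 0\}$, hence $\mathring{Ric}\equiv 0$ everywhere by continuity and the properness of the nodal set.

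The decisive obstacle is the Weyl coupling $\langle W,\mathring{Ric}\otimes\mathring{Ric}\rangle$, which has no a~priori sign and is precisely what forces all current proofs to assume Bach-flatness, harmonic Weyl tensor, or $n=3$. To remove these auxiliary assumptions, I would attempt the Cheeger--Colding-style program suggested in the abstract: show that on the regular open set $\{df\neq 0\}$ the quantity $|\nabla f|^{2}$ depends only on $f$, which exhibits $g$ locally as a warped product $dr^{2}+\varphi(r)^{2}\,g_{N}$ over a regular level set $(N,g_{N})$. The Weyl tensor of such a warped product is explicit, and feeding this structure back into $f\,\mathring{Ric}=\mathring{\nabla^{2}f}$ should force $(N,g_{N})$ Einstein and rigidify $\varphi$, yielding $\mathring{Ric}\equiv 0$. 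I expect this level-set rigidity step to be the bottleneck: producing $|\nabla f|^{2}=\psi(f)$ unconditionally from \eqref{equ:cpe} alone appears to require a genuinely new geometric input beyond what the Bianchi identity and the CPE provide, and is essentially the reason Besse's conjecture remains open in full generality. A plausible route worth trying is to study the refined Bochner quantity $|\nabla f|^{2}-h(f)$ for a well-chosen $h$ solving an ODE built from $R$, and to hope that the resulting elliptic inequality, combined with the compactness of $M$ and the nondegeneracy of the level-set foliation near $\{f=1\}$, forces the desired identity without further curvature assumptions.
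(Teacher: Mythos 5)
The statement you are asked to prove is Besse's conjecture, which the paper itself does \emph{not} prove: it is stated as Conjecture \ref{besse} precisely because it is open. The paper only establishes it under an additional curvature hypothesis --- Bach flatness in general dimension (Theorem \ref{besse-conj-1}), or, in dimension $3$, nonnegativity (in fact vanishing, after integration) of the complete divergence $C_{ijk,}^{\quad\ ijk}$ of the Cotton tensor (Theorem \ref{thm:Besse}). Your submission is accordingly a research program rather than a proof, and you say so yourself: you flag the Weyl coupling $\langle W,\mathring{Ric}\otimes\mathring{Ric}\rangle$ as having no sign, and you concede that the level-set rigidity $|\nabla f|^{2}=\psi(f)$ ``appears to require a genuinely new geometric input.'' That concession is exactly where the proof is missing, so there is nothing to check beyond noting that the gap is real and is the same gap that keeps the conjecture open.

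To make the comparison concrete: the warped-product structure you hope to extract unconditionally is precisely what the paper obtains, but only \emph{after} an extra hypothesis kills the augmented Cotton tensor $D_{ijk}=f^{2}C_{ijk}-fW_{ijkl}f^{l}$. The mechanism is Proposition \ref{int_id_1}, $\int f^{p}B_{jk}f^{j,k}=\frac{1}{2(n-1)}\int f^{p-2}|D|^{2}$, so Bach flatness forces $D\equiv 0$; then Proposition \ref{2-key} shows $|D|^{2}$ controls both $W^{2}\bigl|A-\frac{H}{n-1}g^{\Sigma}\bigr|^{2}$ and $|\nabla^{\Sigma}W|^{2}$, giving umbilic level sets with $|\nabla f|$ constant along them --- i.e.\ your $|\nabla f|^{2}=\psi(f)$. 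Without Bach flatness (or the dimension-$3$ identity \eqref{equ:c=d} together with a sign condition on $\mathcal C$), the left-hand side of the integral identity is not known to vanish, and no amount of Bochner manipulation of $f\,\mathring{Ric}=\mathring{\nabla^{2}f}$ alone is known to produce the warped-product splitting. Your proposed substitute --- studying $|\nabla f|^{2}-h(f)$ for an ODE-chosen $h$ --- is a reasonable thing to try, but you do not carry it out, and the resulting elliptic inequality would still contain the unsigned Weyl term. As written, the proposal does not establish the conjecture.
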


It is clear that $(M^n, \ g)$ is Einstein if it admits a trivial solution $f\equiv -1$. Other CPE metrics with constant function $f$ are Ricci flat metrics.
$g$ is isometric to a round sphere metric if it is a Einstein CPE metric with a non-constant function $f$. Hence Conjecture \ref{besse} really says that a 
CPE metric with a non-constant solution $f$ to \eqref{equ:cpe} is isometric to a round sphere metric. Lafontaine in \cite{Lafontaine} verified Conjecture \ref{besse} 
when assuming metrics are locally conformally flat. Recently Chang, Hwang, and Yun in \cite{Yun_Huang} verified Conjecture \ref{besse} for metrics of harmonic 
curvature. \\

Recently in \cite{C-C2, C-C1} the authors studied Bach flat gradient Ricci solitons. Based on the similar
idea from \cite{C-C2, C-C1} we are able to solve the classifying problem raised in \cite{F-M}  for Bach flat vacuum static spaces in general dimensions. It is worth to mention that
we will include in our list the vacuum static spaces $S^1({\frac 1{\sqrt{n-2}}})\times E^{n-1}$ that were not accounted in
the lists given in \cite{K-O, Kobayashi} when $n > 3$. In the mean time, we are also able to verify Conjecture \ref{besse} for Bach flat CPE metrics. \\

Particularly in dimension 3, we establish an intriguing integral identity
\begin{equation}\label{section-4}
\int_{M^3} f^p {\mathcal C} = - \frac p4 \int_{M^3} f^p |C|^2
\end{equation}
where ${\mathcal C} = C_{ijk,}^{\quad \ ijk}$ is the complete divergence and $C_{ijk}$ is the Cotton tensor, on a compact 3-manifold $(M^3, \ g)$ admitting non-constant solution $f$ to the equation
\begin{equation}\label{equ:intro-both}
fR_{ij} = f_{i,j} + \Phi g_{ij}
\end{equation}
for some function $\Phi$. 
Therefore we are able to obtain stronger results for both static metrics and CPE metrics in dimension 3. For vacuum static spaces, based on the solutions to the 
corresponding  ODE given in \cite{Kobayashi}, we are able to solve the classifying problem raised in \cite{F-M}.

\begin{thm}\label{thm:f-m} Suppose that $(M^3, \ g)$ is compact vacuum static space with no boundary with nonnegative complete divergence $\mathcal C$ of the Cotton tensor. Then it must be one of the following up to a finite quotient: 
\begin{itemize}
\item Flat 3-manifolds;

\item $S^3$;

\item $S^1 \times S^2$;

\item $S^1 \times_{r} S^2$ for $g= ds^2 + r^2(s)g_{S^2}$, where $r(s)$ is a periodic function given in Eample 4 in \cite{Kobayashi}.
\end{itemize}
\end{thm}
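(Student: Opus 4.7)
The plan is to use the integral identity (1.5) to force local conformal flatness, then invoke Kobayashi's classification of compact locally conformally flat vacuum static $3$-manifolds from \cite{Kobayashi}.

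The vacuum static equation (1.3) can be rewritten as $f\,Ric = \nabla^2 f + \frac{R}{n-1}fg$, which is precisely the form (1.6) with $\Phi = \frac{R}{n-1}f$; hence the integral identity (1.5) is available. I would choose the even exponent $p=2$ so as to eliminate any sign ambiguity in $f^p$, and combine the hypothesis $\mathcal{C}\geq 0$ with (1.5) to obtain
\begin{equation*}
0 \;\leq\; \int_{M^3} f^2 \mathcal{C} \;=\; -\frac{1}{2}\int_{M^3} f^2 |C|^2 \;\leq\; 0.
\end{equation*}
Both sides must therefore vanish, so $f^2 |C|^2 \equiv 0$ on $M^3$. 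Tracing (1.3) in dimension three yields the Helmholtz-type equation $\Delta f + \tfrac{R}{2}f = 0$ for $f$, to which unique continuation applies; thus a nontrivial $f$ cannot vanish on any open set, the set $\{f\neq 0\}$ is open and dense, and by continuity $C\equiv 0$ everywhere. In dimension three this is equivalent to $g$ being locally conformally flat.

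With $(M^3,g)$ known to be locally conformally flat, Kobayashi's structure theorem \cite{Kobayashi} applies: near every regular value of $f$ the metric is a warped product $ds^2 + r(s)^2 g_0$ over a base $g_0$ of constant sectional curvature, with the warping factor $r(s)$ governed by an explicit ODE. Matching these local warped models across the critical set of $f$ on a closed manifold and reading off the admissible periodic solutions of the ODE, as carried out in Examples 1--4 of \cite{Kobayashi}, leaves exactly the four families listed in the theorem, each possibly divided by a finite group of isometries. The main technical input in this route is the identity (1.5) itself; once it is granted, the sign-pinching above and the appeal to Kobayashi are essentially routine, so the bulk of the effort will lie in establishing (1.5).
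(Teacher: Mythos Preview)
Your proposal is correct and follows essentially the same route as the paper: apply the integral identity (1.5) with an even exponent to force $C\equiv 0$ from the sign hypothesis on $\mathcal{C}$, then invoke Kobayashi's classification of compact locally conformally flat vacuum static $3$-manifolds. Your choice of $p=2$ and the unique-continuation argument to pass from $f^2|C|^2\equiv 0$ to $C\equiv 0$ are exactly the steps the paper leaves implicit; as you note, the real work is the derivation of (1.5), which occupies Sections~2--4.
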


Regarding Conjecture \ref{besse}, based on \cite{Lafontaine,Yun_Huang}, we prove the following:

\begin{thm}\label{thm:Besse} Conjecture \ref{besse} holds for compact Riemannian 3-manifold with no boundary with 
nonnegative complete divergence $\mathcal C$ of the Cotton tensor.
\end{thm}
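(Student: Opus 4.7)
My first step would be to cast the critical point equation \eqref{equ:cpe} into the framework of \eqref{equ:intro-both}. Solving \eqref{equ:cpe} for $fR_{ij}$ yields
$$fR_{ij} \;=\; f_{i,j} + \Phi\, g_{ij}, \qquad \Phi \;=\; \frac{R}{n-1}\,f \;-\; \frac{R}{n(n-1)},$$
where $R$ is the (constant) scalar curvature of a CPE metric. In particular the integral identity \eqref{section-4} proved for solutions of \eqref{equ:intro-both} is available on any compact CPE $3$-manifold $(M^3,g)$.

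Second, I would use the two sign conditions in tandem. Choosing an even positive power $p=2$ in \eqref{section-4} and noting $f^2\geq 0$, one obtains
$$\int_{M^3} f^2\,\mathcal{C} \;=\; -\frac{1}{2}\int_{M^3} f^2\,|C|^2.$$
The hypothesis $\mathcal{C}\geq 0$ forces the left-hand side to be nonnegative, while the right-hand side is manifestly nonpositive, so both integrals must vanish. Consequently $f^2\,|C|^2 \equiv 0$, so the Cotton tensor $C$ vanishes on the open set $\{f\neq 0\}$.

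Third, I would upgrade the conclusion to $C\equiv 0$ everywhere. Taking the trace of \eqref{equ:cpe} produces the linear elliptic equation
$$\Delta f + \frac{R}{n-1}\,f \;=\; \frac{R}{n-1},$$
to which Aronszajn's unique continuation theorem applies: if $f$ vanished on a nonempty open set it would vanish identically, contradicting nontriviality. Hence $\{f\neq 0\}$ is dense in $M^3$ and $C\equiv 0$ on $M^3$ by continuity. In dimension three the vanishing of the Cotton tensor is equivalent to being locally conformally flat, so Conjecture \ref{besse} now follows from Lafontaine's theorem \cite{Lafontaine}; alternatively, since a constant scalar curvature metric with vanishing Cotton tensor has harmonic curvature in dimension three, one may instead invoke Chang-Hwang-Yun \cite{Yun_Huang}.

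The main obstacle I anticipate is the possible sign change of $f$: for a CPE solution the naive test function $f$ has no definite sign in \eqref{section-4}, so one must pass to an even power in order that $\int f^p\mathcal{C}$ and $-\frac{p}{4}\int f^p|C|^2$ be comparable in sign with the hypothesis on $\mathcal{C}$. Once this is arranged, the passage from ``$C\equiv 0$ on a dense set'' to ``$C\equiv 0$'' via unique continuation, and the final reduction to the known locally conformally flat (equivalently harmonic curvature) case, are both routine.
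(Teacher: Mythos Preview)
Your argument is correct and follows the same route as the paper: apply the integral identity \eqref{section-4} with an even exponent (the paper's Corollary in Section~4 is stated for all $p\geq 2$, so $p=2$ is available), use the sign hypothesis on $\mathcal{C}$ to force $f^2|C|^2\equiv 0$, pass to $C\equiv 0$ by density of $\{f\neq 0\}$, and then quote Lafontaine \cite{Lafontaine} or Chang--Hwang--Yun \cite{Yun_Huang}. The only place you add detail beyond the paper is the density step, and in fact this is even easier than invoking Aronszajn: if $f$ vanished on an open set then the traced equation $\Delta f+\tfrac{R}{n-1}f=\tfrac{R}{n-1}$ would force $R=0$ there, hence $R\equiv 0$ (constant), and then $f$ is harmonic on a closed manifold, so constant, reducing to the trivially Einstein case.
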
 

The organization of this paper is as follows: In section 2 we introduce Cotton tensors and Bach tensors on Riemannian manifolds. More importantly we introduce an augmented Cotton
tensor and prove an integral identity to allow us to most efficiently use the equation \eqref{equ:intro-both}. In section 3 we use the vanishing of the augmented Cotton tensor to establish the local splitting property. Then we give a complete classification for Bach flat vacuum static and verify Conjecture \ref{besse} for Bach flat CPE manifolds in general dimensions. In section 4 
we focus on dimension 3 and establish \eqref{section-4} and prove Theorem \ref{thm:f-m} and Theorem \ref{thm:Besse}.

\section{Preliminaries}

In this section we will use Bach flatness to force the vanishing of the augmented Cotton tensor $D$ as the authors did for gradient Ricci solitons in \cite{C-C2, C-C1}. 
To introduce the Bach curvature tensor of a Riemannian manifold $(M^n, \ g)$, 
we recall the well known decomposition of Riemann curvature tensor.
\begin{equation}\label{rie-weyl-schouten}
R_{ijkl} = W_{ijkl} + \frac 1{n-2}(S_{ik}g_{jl} - S_{il}g_{jk} - S_{jk}g_{il} + S_{jl}g_{ik})
\end{equation}
where $R_{ijkl}$ is the Riemann curvature tensor, $W_{ijkl}$ is the Weyl curvature tensor,
$$
S_{ij} = R_{ij} - \frac 1{2(n-1)}Rg_{ij}
$$
is Schouten curvature tensor, $R_{ij} = R_{ijk}^{\quad j}$ is Ricci curvature tensor, and $R= R_i^{\ i}$ is the Ricci scalar curvature.
Then the Cotton tensor $C$ is given as:
\begin{equation}\label{cotton}
C_{ijk} = S_{jk,i} - S_{ik,j}.
\end{equation}
The following consequence of Bianchi identity is often useful:
\begin{equation}\label{weyl-cotton}
W_{ijkl,}^{\quad \ \ l} =  - \frac{n-3}{n-2} C_{ijk}
\end{equation}
when $n\geq 4$. We are now ready to introduce the Bach curvature tensor  on a Riemannian manifold $(M^n, \ g)$ as follows:  
\begin{equation}\label{bach}
B_{jk} = \frac{1}{n-3} W_{ijkl,}^{\quad \ li} + \frac 1{n-2}S^{il} W_{ijkl},
\end{equation}
when $n\geq 4$. Using \eqref{weyl-cotton} we may extend the definition of Bach tensor in dimensions including $3$ as follows:
\begin{equation}\label{bach-3}
B_{jk} =  \frac 1{n-2}(-C_{ijk,}^{\quad i} +  S^{il} W_{ijkl}).
\end{equation}
Finally, as in \cite{K-O} and \cite{C-C2, C-C1}, we define the
following augmented Cotton tensor, which will play an important role in the calculations in this paper.
\begin{equation}\label{DCW}
 D_{ijk} = f^2 C_{ijk} -  f W_{ijkl} f^l.
\end{equation}
It is easy to see that $D_{ijk}$ is anti-symmetric about the indices $i$ and
$j$. In fact the following is a key observation (1.11) in \cite{K-O}. In order to treat both static equations \eqref{equ:static-1} and critical point equation
\eqref{equ:cpe} in the same way we need to rewrite them in a unified way. We first rewrite the static equation \eqref{equ:static-1} as follows:
\begin{equation}\label{equ:static-2}
f S = \nabla^2 f - \frac 1n (\Delta f - \frac {n-2}{2(n-1)}Rf)g.
\end{equation}
We then rewrite the critical point equation \eqref{equ:cpe} as follows:
\begin{equation}\label{equ:cpe-1}
fS = \nabla^2 f + (\frac {Rf}{2(n-1)}  - \frac R{n(n-1)} )g.
\end{equation}
In summary we will write both \eqref{equ:static-2} and \eqref{equ:cpe-1} in following form
\begin{equation}\label{equ:both}
fS = \nabla^2 f + \Phi g
\end{equation}
for a function $\Phi$ (this $\Phi$ is different from that in \eqref{equ:intro-both}).  

\begin{prop}\label{D_ijk} Suppose that $(M^n, \ g)$ is a Riemannain manifold admitting a smooth solution $f$ to the equation \eqref{equ:both}. Then
\begin{align}\label{equ:D}
D_{ijk} = \frac{1}{n-2} \underset{i,j}{Alt}\{(n-1)f_{i,k} f_j + \Psi_j g_{ik}\},
\end{align}
where $\underset{i,j}{Alt}$ means anti-symmetrizing with the indices $i$ and $j$, and
\begin{equation}\label{B-i}
\Psi_j = - (n-2) f\Phi_j + f_{j,l}f^l + n \Phi f_j.
\end{equation}
\end{prop}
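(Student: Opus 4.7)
The plan is to establish \eqref{equ:D} by a direct computation that exploits the two ways in which third derivatives of $f$ arise from the equation \eqref{equ:both}. First, I would differentiate $fS_{jk} = f_{j,k} + \Phi g_{jk}$ with respect to $x^i$, antisymmetrize in the indices $i$ and $j$, and multiply through by $f$. On the left this produces $f^2 C_{ijk}$ (recall $C_{ijk} = S_{jk,i} - S_{ik,j}$) together with a correction $-f(f_i S_{jk} - f_j S_{ik})$; on the right it produces $f(f_{j,k,i} - f_{i,k,j})$ together with $f(\Phi_i g_{jk} - \Phi_j g_{ik})$. The Cotton contribution is exactly the first piece of the augmented tensor $D_{ijk}$ defined in \eqref{DCW}.

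Next, I would convert the third-derivative difference $f_{j,k,i} - f_{i,k,j}$ into a Riemann curvature contraction via the Ricci identity for the gradient, producing (up to the sign set by the paper's curvature convention) $\pm f R_{ijkl} f^l$, and then substitute the Weyl--Schouten decomposition \eqref{rie-weyl-schouten}. The Weyl piece contributes precisely $\pm f W_{ijkl} f^l$, which combines with $f^2 C_{ijk}$ on the other side to assemble $D_{ijk}$ itself. The Schouten piece yields four terms of the schematic shape $f S_{\alpha\beta} f_\gamma$ or $f S_{\alpha l} f^l g_{\gamma\delta}$, each of which I would reduce by using \eqref{equ:both} itself in the forms $f S_{\alpha\beta} = f_{\alpha,\beta} + \Phi g_{\alpha\beta}$ and $f S_{\alpha l} f^l = f_{\alpha,l} f^l + \Phi f_\alpha$, trading every occurrence of the Schouten tensor for Hessian and $\Phi$-contributions.

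The final step is to collect the result by tensor type. The antisymmetric combination $f_{i,k} f_j - f_{j,k} f_i$ appears with coefficient $\frac{1}{n-2}$ from the Schouten part of Riemann and with coefficient $1$ from the correction $-f(f_i S_{jk}-f_j S_{ik})$, giving total coefficient $\frac{n-1}{n-2}$ and matching the first summand in \eqref{equ:D}. The coefficient of $g_{ik}$ assembles $\frac{1}{n-2}(f_{j,l} f^l + 2\Phi f_j)$ from the Schouten part, $-f\Phi_j$ from the differentiated $\Phi g$ term, and $+\Phi f_j$ from the correction, and these combine to exactly $\frac{1}{n-2}\Psi_j$. The main obstacle is purely bookkeeping: keeping the Ricci-identity sign consistent with the curvature convention, maintaining a clean antisymmetrization in $(i,j)$ with $k$ fixed throughout, and verifying that all $\Phi$-related terms land inside $\Psi_j$ rather than contaminating the Hessian--gradient term.
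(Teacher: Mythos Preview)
Your proposal is correct and follows essentially the same route as the paper: differentiate \eqref{equ:both}, antisymmetrize to isolate $f^2 C_{ijk}$, apply the Ricci identity and the Weyl--Schouten decomposition \eqref{rie-weyl-schouten} to peel off $fW_{ijkl}f^l$, and then substitute \eqref{equ:both} back in to trade all Schouten occurrences for Hessian and $\Phi$ data. The only cosmetic difference is that the paper pauses at an intermediate expression with $\tilde\Psi_j = -(n-2)f\Phi_j + fS_{jl}f^l$ before invoking \eqref{equ:both} one last time, whereas you convert the Schouten terms immediately; your coefficient bookkeeping for both the $f_{i,k}f_j$ term and the $g_{ik}$ term is accurate.
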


\begin{proof}
It is a straightforward calculation based on the equation \eqref{equ:both}
and the definition of $D_{ijk}$ (cf. \cite{K-O}). For the convenience of readers we include some
calculations here. First we calculate
\begin{align*}
f^2 C_{ijk} & = f^2(S_{jk,i} - S_{ik,j}) \\
& = f(f_{k,ji} - f_{k,ij}) - f(S_{jk}f_i-S_{ik}f_j) + f(\Phi_ig_{jk} - \Phi_jg_{ik}) \\
\end{align*}
Then recall the Ricci identity
$$
f_{k,ji} - f_{k,ij} = f_lR^l_{\ \ kji} = R_{ijkl}f^l
$$
and conclude that
$$
f_{k,ji} - f_{k, ij} =  W_{ijkl}f^l  + \frac 1{n-2}(S_{jl}g_{ik} - S_{il}g_{jk})f^l + \frac 1{n-2}(S_{ik}f_j - S_{jk}f_i).
$$
Hence we obtain 
$$
(n-2)(f^2C_{ijk} - fW_{ijkl}f^l) =  \underset{i,j}{Alt}\{(n-1)fS_{ik} f_j + g_{ik}\tilde\Psi_j\}
$$
for 
$$
\tilde\Psi_j = - (n-2) f\Phi_j + fS_{jl}f^l.
$$
From here, using the equation \eqref{equ:both}, we complete the proof of \eqref{equ:D}.
\end{proof}

\begin{remark}\label{rem:Phi} Note that 
\begin{equation}\label{static-Phi}
\Phi = \frac 1n(\frac {n-2}{2(n-1)}Rf - \Delta f) \text{ and } \ \Psi_j = f_{j,l}f^l - \Delta f f_j 
\end{equation}
for static metrics and
\begin{equation}\label{cpe-Phi}
\Phi = \frac {Rf}{2(n-1)} - \frac R{n(n-1)} \text{ and } \ \Psi_j = f_{j,l}f^l - \Delta f f_j 
\end{equation}
for CPE metrics. It is very intriguing to see that $\Psi$ is the same for the both cases.
\end{remark}
Then we can rewrite the Bach tensor as follows:

\begin{prop}\label{prop:Bach} Suppose that $(M^n, \ g)$ is a Riemannian manifold admitting a smooth solution $f$ to the equation \eqref{equ:both}. Then
\begin{equation}\label{D-bach}
(n-2)B_{jk} = -  \nabla^i (\frac{D_{ijk}}{f^2}) + \frac{n-3}{n-2} C_{lkj} \frac{f^l}{f} + W_{ijkl} \frac{f^i f^l}{f^2} .
\end{equation}
\end{prop}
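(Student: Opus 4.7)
The plan is to compute $\nabla^i(D_{ijk}/f^2)$ directly from the definition \eqref{DCW}, turn each resulting term into one of the three pieces on the right hand side of \eqref{D-bach}, and isolate $(n-2)B_{jk}$ using \eqref{bach-3}. Since
$$
\frac{D_{ijk}}{f^2} = C_{ijk} - \frac{W_{ijkl} f^l}{f},
$$
the computation naturally splits into two divergences, namely $\nabla^i C_{ijk}$ and $\nabla^i(W_{ijkl} f^l/f)$.

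The first piece is immediate: rearranging the extended Bach formula \eqref{bach-3} gives
$$
\nabla^i C_{ijk} = - (n-2) B_{jk} + S^{il} W_{ijkl},
$$
which is exactly what produces the Bach term we want to solve for. For the second piece I would expand using the product rule
$$
\nabla^i\!\left(\frac{W_{ijkl} f^l}{f}\right) = \frac{(\nabla^i W_{ijkl}) f^l}{f} + \frac{W_{ijkl} f^{l,i}}{f} - \frac{W_{ijkl} f^l f^i}{f^2}
$$
and process the three summands separately. For the first summand, I would combine the Weyl symmetries $W_{ijkl} = W_{klij} = -W_{klji}$ with the contracted second Bianchi identity \eqref{weyl-cotton} to produce the identity $\nabla^i W_{ijkl} = \tfrac{n-3}{n-2} C_{klj}$, then use the antisymmetry $C_{klj} = - C_{lkj}$ to match the sign convention in \eqref{D-bach}. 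For the middle summand I would invoke \eqref{equ:both} to write $f^{l,i} = f S^{il} - \Phi g^{il}$; the $\Phi g^{il}$ term is annihilated by the trace-free Weyl tensor, leaving $W_{ijkl} f^{l,i}/f = S^{il} W_{ijkl}$, which is precisely the term needed to cancel the $S^{il}W_{ijkl}$ contribution coming from $\nabla^i C_{ijk}$. The third summand already appears on the right side of \eqref{D-bach} up to sign.

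Collecting everything, the $S^{il}W_{ijkl}$ terms cancel and one is left with $(n-2)B_{jk}$ equal to the claimed expression. I do not anticipate a conceptual obstacle here: the entire statement is a divergence identity, and the proof is essentially bookkeeping once the correct formula for $\nabla^i W_{ijkl}$ in terms of the Cotton tensor is in hand. The main place that requires care is precisely that index-juggling step, since the contracted Bianchi identity \eqref{weyl-cotton} is stated in terms of divergence on the \emph{last} index and must be transported to the first index via the pair-exchange and last-pair-antisymmetry of the Weyl tensor, which flips a sign and forces the rewriting $C_{klj} = - C_{lkj}$ that produces the $+\tfrac{n-3}{n-2} C_{lkj} f^l/f$ in \eqref{D-bach}.
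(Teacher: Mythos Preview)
Your proposal is correct and is essentially the paper's own argument run in the reverse direction: the paper starts from $(n-2)B_{jk}=-C_{ijk,}^{\quad i}+S^{il}W_{ijkl}$, substitutes $C_{ijk}=W_{ijkl}f^l/f+D_{ijk}/f^2$, and expands, whereas you start from $\nabla^i(D_{ijk}/f^2)$ and expand; the same product-rule terms appear and cancel in both. The index-juggling you flag (transporting \eqref{weyl-cotton} to a first-index divergence via pair exchange and $C_{klj}=-C_{lkj}$) is exactly the step the paper leaves implicit.
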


\begin{proof}
It is straightforward to calculate that, from the definition \eqref{DCW},
\begin{align*}
(n-2) B_{jk} &= - C_{ijk,}^{\quad i} + S^{il} W_{ijkl} = -  \nabla^i (
W_{ijkl} \frac{f^l}{f} + \frac{1}{f^2} D_{ijk}) + S^{il} W_{ijkl}\\
&=  - \nabla^i (\frac{D_{ijk}}{f^2})  - W_{ijkl,}^{\quad \ \ i}
\frac{f^l}{f} + W_{ijkl}( S^{il} - \frac{f^{i,l}}{f} +\frac{f^i f^l}{f^2})\\
&=  - \nabla^i (\frac{D_{ijk}}{f^2}) + \frac{n-3}{n-2} C_{lkj}
\frac{f^l}{f} + W_{ijkl} \frac{f^if^l}{f^2}.
\end{align*}
\end{proof}

Now, as a consequence of \eqref{D-bach}, we can state one of  the key identities in this paper. To state that we introduce some notations.
We will denote the level set
$$
M_c = \{x\in M^n: f(x) = c\}
$$
and
$$
M_{c_1,c_2} = \{x\in M^n: c_1 <  f(x) < c_2\}.
$$

\begin{prop} \label{int_id_1} Suppose that $(M^n, \ g)$ is a Riemannian manifold admitting a smooth solution $f$ to the equation
\eqref{equ:both}. Let $c_1$ and $c_2$ be two regular values for the function $f$ 
and two level sets $M_{c_1}$ and $M_{c_2}$ be compact. Then, for all $p \geq 2$, we have the 
following integral identity:
\begin{align}\label{equ:main}
\int_{M_{c_1,c_2}} f^p B_{jk} f^{j,k} = 
\frac{1}{2(n-1)} \int_{M_{c_1,c_2}} f^{p-2} |D|^2.
\end{align}
\end{prop}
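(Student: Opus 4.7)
\medskip

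\noindent\textbf{Proof plan.}  The strategy is to substitute the expression for $(n-2)B_{jk}$ from Proposition \ref{prop:Bach} into the left-hand side, integrate by parts the divergence term, and reduce everything to an algebraic identity for the integrand of the right-hand side.  Concretely, I would multiply the identity
\[
(n-2)B_{jk}=-\nabla^{i}\!\left(\tfrac{D_{ijk}}{f^{2}}\right)+\tfrac{n-3}{n-2}\,C_{lkj}\tfrac{f^{l}}{f}+W_{ijkl}\tfrac{f^{i}f^{l}}{f^{2}}
\]
through by $f^{p}f^{j,k}$ and integrate over $M_{c_{1},c_{2}}$.  The first right-hand term, after integration by parts, produces an interior piece
\(
p\!\int f^{p-3}f^{i}f^{j,k}D_{ijk}+\int f^{p-2}f^{j,k,i}D_{ijk}
\)
together with a boundary contribution on $M_{c_{1}}\cup M_{c_{2}}$.

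\medskip

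The heart of the argument is the following pointwise identity, which I would establish first from Proposition \ref{D_ijk} using $\Psi_{j}=f_{j,l}f^{l}-\Delta f\,f_{j}$ (Remark \ref{rem:Phi}):
\[
f_{j}f_{i,k}D^{ijk}=\frac{n-2}{2(n-1)}|D|^{2}.
\]
This is a routine but essential expansion: writing $(n-2)D^{ijk}$ via Proposition \ref{D_ijk}, both $(n-2)f_{j}f_{i,k}D^{ijk}$ and $(n-2)^{2}|D|^{2}/[2(n-1)]$ reduce to the same combination of the scalars $|\nabla f|^{2}|\nabla^{2}f|^{2}$, $|X|^{2}$ (where $X_{k}=f^{i}f_{i,k}$), $(f\!\cdot\!\Psi)\Delta f$ and $\Psi\!\cdot\!X$.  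A parallel calculation shows the companion identity $f^{i}f^{j,k}D_{ijk}=-\tfrac{n-2}{2(n-1)}|D|^{2}$, which immediately handles the $p\!\int f^{p-3}f^{i}f^{j,k}D_{ijk}$ term.  Along the way I would record that $D$ is trace-free in $(j,k)$ and in $(i,k)$, which both follow from the explicit formula in Proposition \ref{D_ijk}.

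\medskip

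For the $\int f^{p-2}f^{j,k,i}D_{ijk}$ piece, I would exploit the antisymmetry of $D$ in $(i,j)$ together with the Ricci identity $f_{j,k,i}-f_{i,k,j}=R_{ijkl}f^{l}$ to get $f^{j,k,i}D_{ijk}=\tfrac{1}{2}R_{ijkl}f^{l}D^{ijk}$.  Decomposing $R_{ijkl}$ via \eqref{rie-weyl-schouten} and using the two trace-free properties of $D$ collapses the Schouten piece to $\tfrac{2}{n-2}S_{ik}f_{j}D^{ijk}$, and then \eqref{equ:both} rewritten as $fS_{ik}=f_{i,k}+\Phi g_{ik}$ together with trace-freeness of $D$ in $(i,k)$ turns this into $\tfrac{2}{n-2}f^{-1}f_{i,k}f_{j}D^{ijk}$.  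The key identity above then converts this Schouten contribution into a multiple of $f^{-1}|D|^{2}$.

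\medskip

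The remaining ingredients are the two Weyl-type contributions from Proposition \ref{prop:Bach} and the Ricci-decomposition step, namely $U=\int f^{p-2}W_{ijkl}f^{l}D^{ijk}$ and $R=\int f^{p-2}W_{ijkl}f^{i}f^{l}f^{j,k}$, together with the $\tfrac{n-3}{n-2}\int f^{p-1}C_{lkj}f^{l}f^{j,k}$ term.  Using $D_{ijk}=f^{2}C_{ijk}-fW_{ijkl}f^{l}$ to rewrite $C_{lkj}$ in terms of $D$ and $W$, and applying the first Bianchi identity of $W$ together with its antisymmetries (which in particular force $W_{lmkj}f^{j,k}=0$ and identify $W_{lkjm}f^{l}f^{m}f^{j,k}$ with $R$), the Cotton term splits into a $D$-part that combines with the key identity and a Weyl-part that combines with $U$ and $R$.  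Finally I would verify that all $f^{p-3}|D|^{2}$ contributions assemble with the correct coefficient $\tfrac{1}{2(n-1)}$ and that the boundary contribution is absorbed (using that $f^{i}f^{j,k}D_{ijk}$ is proportional to $|D|^{2}$ and the two boundary integrals over $M_{c_{1}}$, $M_{c_{2}}$ are accounted for).

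\medskip

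\noindent\textbf{Main obstacle.}  The bookkeeping for the Weyl terms is where the proof is most delicate: one must use the full set of Weyl symmetries (pair antisymmetry, pair swap, first Bianchi) together with the relation $W_{ijkl}f^{l}=fC_{ijk}-D_{ijk}/f$ to show that $U$, $R$ and the Weyl-part of the Cotton term conspire to give the correct coefficient.  Tracking the boundary integrals produced by the integration by parts and verifying that they disappear in the final identity is the other source of care; both simplifications ultimately rest on the pointwise identity $f_{j}f_{i,k}D^{ijk}=\tfrac{n-2}{2(n-1)}|D|^{2}$.
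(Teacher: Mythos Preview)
Your route differs substantially from the paper's, and the difference originates in a typo in the statement itself: in \eqref{equ:main} the factor $f^{j,k}$ on the left-hand side should be $f^{j}f^{k}$. One sees this both from the paper's own proof (the first line invokes ``the anti-symmetries of $W_{ijkl}$, $C_{ijk}$ and $D_{ijk}$'', which kill the last two terms of \eqref{D-bach} only when contracted with $f^{j}f^{k}$, not with the Hessian) and from the way the proposition is applied in Section~4, where the relation $\int_{M}f^{p}B_{ij}f^{i}f^{j}=\frac{1}{2(n-1)}\int_{M}f^{p-2}|D|^{2}$ is exactly what is needed to pass to \eqref{2-equ}. With this correction the paper's argument is three lines: contracting \eqref{D-bach} with $f^{j}f^{k}$ gives
\[
(n-2)B_{jk}f^{j}f^{k}=-\frac{1}{f^{2}}D_{ijk,}^{\quad i}f^{j}f^{k},
\]
since $C_{lkj}f^{l}f^{k}=0$, $W_{ijkl}f^{i}f^{l}f^{j}f^{k}=0$, and $D_{ijk}f^{i}f^{j}=0$. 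Multiplying by $f^{p}$ and integrating by parts over $M_{c_{1},c_{2}}$ produces no boundary term (the outward normal on each level set is $\nabla f/|\nabla f|$, and again $D_{ijk}f^{i}f^{j}f^{k}=0$), and the only surviving interior term is $\int f^{p-2}D_{ijk}f^{j}f^{i,k}$, by antisymmetry of $D$ in $(i,j)$ and symmetry of the Hessian. Your pointwise identity $D_{ijk}f^{j}f^{i,k}=\tfrac{n-2}{2(n-1)}|D|^{2}$ then finishes the proof.

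In other words, the one step of your plan that is genuinely needed is precisely the pointwise identity you singled out; everything else you set up---the third-derivative term $\int f^{p-2}f^{j,k,i}D_{ijk}$, the Ricci identity, the Riemann decomposition, the delicate bookkeeping of the Weyl/Cotton pieces $U$, $R$ and the Bianchi manipulations---arises only because you contracted with $f^{j,k}$ rather than $f^{j}f^{k}$. That route does not collapse to \eqref{equ:main}: on a closed manifold the computation you outline is essentially the content of Section~4 and yields instead (after the same integration by parts you describe) the relation
\[
\int_{M} f^{p+1}B_{jk}f^{j,k}=-(p+1)\int_{M}f^{p}B_{jk}f^{j}f^{k}-\frac{n-4}{2(n-1)(n-2)}\int_{M}f^{p}\,D\!\cdot\!C,
\]
which shows the Hessian-contracted integral is \emph{not} simply $\frac{1}{2(n-1)}\int f^{p-1}|D|^{2}$ in general. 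So the ``main obstacle'' you identify is not a bookkeeping issue to be overcome but a signal that the contraction should be with $f^{j}f^{k}$; once you make that change, the Weyl and Cotton terms vanish immediately and the boundary terms vanish by antisymmetry, exactly as in the paper.
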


\begin{proof}
By the anti-symmetries of $W_{ijkl}$, $C_{ijk}$ and $D_{ijk}$, from \eqref{D-bach} one gets 
\begin{equation*}
B_{jk} f^{j,k} = - \frac{1}{n-2}D_{ijk,}^{\quad i} \frac{f^j f^k}{f^2}.
\end{equation*}
Applying integrating by parts, we get
\begin{equation*}
(n-2)\int_{M_{c_1,c_2}} f^p B_{jk} f^{j,k} 
=  \int_{M_{c_1,c_2}} D_{ijk} \nabla^i(f^{p-2} f^j f^k ).
\end{equation*}
Again, due to the anti-symmetries and trace-free properties of Cotton tensor $C$ and the augmented Cotton tensor $D$, we arrive at
\eqref{equ:main}
\begin{align*}
(n-2)\int_{M_{c_1,c_2}} f^p B_{jk}   & f^{j,k} 
=  \int_{M_{c_1,c_2}} f^{p-2}D_{ijk}f^{i,k} f^j\\
=& \frac{n-2}{2(n-1)} \int_{M_{c_1,c_2}} f^{p-2} |D|^2.
\end{align*}
\end{proof}

Consequently we obtain the following important initial step to understand the geometric
structure of a Riemannain manifold admitting a smooth solution to the equation \eqref{equ:both}.

\begin{cor}\label{D-cotton}
\label{Bach_flat} The augmented Cotton tensor $D$ vanishes identically on a
Bach flat manifold admitting a smooth non-constant solution $f$ to the equation
\eqref{equ:both}, provided that each level set $f^{-1}(c)$ is
compact for any regular value $c$.
\end{cor}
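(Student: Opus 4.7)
My plan is to apply the integral identity of Proposition~\ref{int_id_1} with $p=2$, which collapses the claim to a pointwise vanishing statement. Bach flatness gives $B_{jk} \equiv 0$, so the left hand side of \eqref{equ:main} vanishes and I obtain
$$
\int_{M_{c_1,c_2}} |D|^2 = 0
$$
for every pair of regular values $c_1 < c_2$ of $f$; the hypothesis that regular level sets are compact ensures the proposition applies to every such pair. Nonnegativity of the integrand then forces $D \equiv 0$ throughout each slab $M_{c_1,c_2}$.

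To extend the vanishing to all of $M$, I would exhaust the regular locus of $f$ by such slabs. By Sard's theorem, regular values of $f$ are dense in $\mathbb{R}$, so every regular point $x$ of $f$ (meaning $\nabla f(x) \neq 0$) can be sandwiched between regular values $c_1 < f(x) < c_2$ arbitrarily close to $f(x)$, hence lies in some slab $M_{c_1,c_2}$ to which the previous step applies. Therefore $D$ vanishes throughout the open set $U \subset M$ of regular points of $f$, and by continuity throughout $\overline U$.

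The only real obstacle is confirming that $U$ is dense in $M$, so that $\overline U = M$ and $D$ vanishes everywhere. Since $f$ is non-constant, if $U$ were not dense then $\nabla f$ would have to vanish identically on some nonempty open set $V$; but then \eqref{equ:both} would reduce on $V$ to the algebraic relation $fS = \Phi g$, and a unique continuation argument for the overdetermined system \eqref{equ:both} would propagate local constancy of $f$ to the entire connected component containing $V$, contradicting the non-constancy hypothesis. In the compact settings of interest in this paper this subtlety is essentially automatic, so the extension from $U$ to $M$ is harmless.
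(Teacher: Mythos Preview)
Your approach matches the paper's: the corollary is presented as an immediate consequence of Proposition~\ref{int_id_1}, and your use of the integral identity with $p=2$ on slabs between regular level sets is exactly the intended argument.

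The only weak spot is your final paragraph. The unique continuation sketch you offer is not fully justified as written, and in any case it is unnecessary. A cleaner observation dispenses with the density issue entirely: by Proposition~\ref{D_ijk} together with Remark~\ref{rem:Phi}, one has
\[
D_{ijk} = \frac{1}{n-2}\,\underset{i,j}{Alt}\{(n-1)f_{i,k}f_j + \Psi_j g_{ik}\},\qquad \Psi_j = f_{j,l}f^l - \Delta f\, f_j,
\]
and every term on the right vanishes pointwise wherever $\nabla f = 0$. Hence $D$ is automatically zero at every critical point of $f$, with no appeal to continuity or unique continuation needed. Combined with your slab argument on the regular set, this covers all of $M$.
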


\section{Bach flat cases}

In this section, based on Corollary \ref{D-cotton}, we investigate geometric structure of a
Bach flat manifold admitting a smooth non-constant solution $f$ to the equation
\eqref{equ:static-1} or \eqref{equ:cpe}. To facilitate our local calculations we need to choose local
frames and set notations. \\

For a regular value $c$, we denote the level set $f^{-1}(c)$ as $\Sigma$,
$W:=|\nabla f|^2$,  and $e_n := \frac{\nabla f}{|\nabla f|}$ as the unit normal to $\Sigma$.
We then choose an orthonormal frame 
$$
\{e_1,e_2,\cdots,e_{n-1},e_n\}
$$ 
along $\Sigma$. We will use Greek letters to denote the index from $1$ to $n-1$, while Latin letters for the index
from $1$ to $n$.
Then the second fundamental form of $\Sigma$ is
\begin{align}
h_{\alpha\beta} = \langle \nabla_{e_\alpha} e_\beta, e_n \rangle = - \langle e_\beta,
\nabla_{e_\alpha} e_n \rangle = -\frac{f_{\alpha, \beta}}{|\nabla f|},
\end{align}
the mean curvature is
\begin{align}\label{meancurvature}
H = g^{\alpha\beta}h_{\alpha\beta} =
W^{-\frac{1}{2}}( f_{n,n} - \Delta f),
\end{align}
and  the square of the norm of the second fundamental form is
\begin{align}
|A|^2 = h_{\alpha\beta}h^{\alpha\beta } = W^{-1} \sum_{\alpha,\beta =1}^{n-1}|f_{\alpha, \beta}|^2.
\end{align}
Furthermore
\begin{align}
|\nabla^\Sigma W|^2 = 4W \sum_{\alpha=1}^{n-1}|f_{n,\alpha}|^2
\end{align}
and
\begin{align}
 |\nabla_n W|^2 = 4W|f_{n,n}|^2.
\end{align}
Now we are ready to prove another key identity in this paper.

\begin{prop}\label{2-key} Suppose that $(M^n, \ g)$ is a Riemannian manifold admitting a non-constant solution
to either \eqref{equ:static-1} or \eqref{equ:cpe}. Then the following identity holds:
\begin{align}\label{levelset}
|D|^2 = 2\frac{(n-1)^2}{(n-2)^2}W^2 |A - \frac{H}{n-1}
g^\Sigma|^2 + \frac{n-1}{2(n-2)}|\nabla^\Sigma W|^2.
\end{align}
\end{prop}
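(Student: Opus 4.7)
The plan is to evaluate $|D|^2$ in the adapted orthonormal frame $\{e_1,\ldots,e_{n-1},e_n\}$ with $e_n=\nabla f/|\nabla f|$ along a regular level set $\Sigma=\{f=c\}$, by substituting the explicit form of $D_{ijk}$ from Proposition \ref{D_ijk} and then organizing the sum according to the distinct index configurations. In this frame $f_\alpha=0$ for $\alpha<n$ and $f_n=W^{1/2}$; moreover $f_{\alpha,\beta}=-W^{1/2}h_{\alpha\beta}$ by definition of the second fundamental form and $f_{n,n}=\Delta f+W^{1/2}H$ by \eqref{meancurvature}. Combining these with the common expression $\Psi_j=f_{j,l}f^l-\Delta f\,f_j$ valid in both the static and CPE cases (Remark \ref{rem:Phi}), one immediately gets $\Psi_\alpha=W^{1/2}f_{\alpha,n}$ and $\Psi_n=WH$.

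Next, using the antisymmetry of $D_{ijk}$ in $i,j$, I would evaluate three types of components from \eqref{equ:D}: (a) the purely tangential $D_{\alpha\beta\gamma}=\tfrac{W^{1/2}}{n-2}(f_{\beta,n}\delta_{\alpha\gamma}-f_{\alpha,n}\delta_{\beta\gamma})$, which contains only the $\Psi g$ contributions because $f_\alpha=f_\beta=0$; (b) the mixed components, which after substitution simplify to $D_{\alpha n\gamma}=-\tfrac{(n-1)W}{n-2}\bigl(h_{\alpha\gamma}-\tfrac{H}{n-1}\delta_{\alpha\gamma}\bigr)$, namely the traceless second fundamental form up to scale; and (c) the components $D_{\alpha nn}$, in which the $(n-1)f_{\alpha,n}f_n$ and $-\Psi_\alpha$ terms partially cancel to leave $W^{1/2}f_{\alpha,n}$. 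The remaining configuration $D_{\alpha\beta n}$ vanishes since $g_{\alpha n}=g_{\beta n}=0$.

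Finally I would assemble $|D|^2$ by squaring and summing, doubling the contributions of types (b) and (c) to include the antisymmetric partners $D_{n\alpha\gamma}$ and $D_{n\alpha n}$. Type (b) produces exactly the first target term $2\tfrac{(n-1)^2}{(n-2)^2}W^2\bigl|A-\tfrac{H}{n-1}g^\Sigma\bigr|^2$. Types (a) and (c) both yield multiples of $\sum_\alpha f_{\alpha,n}^2=|\nabla^\Sigma W|^2/(4W)$, and the elementary identity $\tfrac{1}{2(n-2)}+\tfrac{1}{2}=\tfrac{n-1}{2(n-2)}$ delivers the second term. The main obstacle is purely the careful bookkeeping of the antisymmetric index multiplicities, not any new geometric input beyond the frame identifications already set up at the start of this section.
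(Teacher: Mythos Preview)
Your proposal is correct and all the component evaluations and the final assembly check out. The route you take is a mild reorganization of the paper's proof rather than a genuinely new argument: both rely on Proposition~\ref{D_ijk}, Remark~\ref{rem:Phi}, and the adapted frame, but the paper first expands $(n-2)^2|D|^2$ as a sum of scalar invariants ($|\nabla f|^2|\nabla^2 f|^2$, $|\Psi|^2$, and cross terms) and only then plugs in the frame, whereas you compute each component $D_{\alpha\beta\gamma}$, $D_{\alpha n\gamma}$, $D_{\alpha nn}$ directly and square. Your organization has the small advantage that the geometric meaning of each block (in particular that $D_{\alpha n\gamma}$ is exactly the traceless second fundamental form up to scale) is visible before any squaring, so the two summands of \eqref{levelset} appear separately rather than after cancellations; the paper's invariant expansion, on the other hand, would survive without choosing a frame until the last step.
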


\begin{proof}
By Proposition \ref{D_ijk}, we have
\begin{align*}
(n-2)^2 |D|^2  & = 2(n-1)^2 |\nabla f|^2|\nabla^2 f|^2 + 2(n-1)|\Psi|^2 - 2(n-1)^2f_{k,i}f^if^{k,j}f_j\\
& \quad\quad  + 4(n-1)(\Delta f \nabla f\cdot \Psi - f_{i,j}f^i\Psi^j) \\
& = 2(n-1)^2|\nabla f|^4|A - \frac H{n-1}g^\Sigma|^2 + 2(n-1)|\nabla f|^4 H^2 \\
& \quad\quad + 2(n-1)^2 |\nabla f|^2\sum_{\alpha =1}^{n-1}|f_{n, \alpha}|^2 \\
& \quad\quad  + 2(n-1)|\Psi|^2 + 4(n-1)(\Delta f \nabla f\cdot \Psi - f_{i,j}f^i\Psi^j) 
 \end{align*}
Because
\begin{align*}
         |\nabla^2 f|^2 = \sum_{\alpha,\beta =1}^{n-1}|f_{\alpha, \beta}|^2 +
         2 \sum_{\alpha=1}^{n-1}|f_{n, \alpha}|^2 + |f_{n,n}|^2.
\end{align*}
We also calculate, due to Remark \ref{rem:Phi}, 
\begin{align*}
|\Psi|^2 = |\nabla f|^2(\sum_{\alpha = 1}^{n-1}|f_{n, \alpha}|^2 + |f_{n,n} - \Delta f|^2)
\end{align*}
and
\begin{align*}
\Delta f \nabla f\cdot \Psi- f_{i,j}f^i\Psi^j   & = - |\nabla f|^2 (f_{n,n} - \Delta f )(f_{n,n}  - \Delta f)\\
& - |\nabla f|^2 \sum_{\alpha = 1}^{n-1} |f_{n, \alpha}|^2
\end{align*}
Therefore
\begin{align*}
\frac{(n-2)^2}{n-1} |D|^2 = 2(n-1)W^2|A - \frac H{n-1}g^\Sigma|^2 + \frac {n-2}2 |\nabla^\Sigma W|^2    
\end{align*}

\end{proof}
An immediate consequence is following:

\begin{cor}\label{mean-curvature} Suppose that $(M^n, \ g)$ ($n\geq 3$) is a Riemannian manifold admitting a non-constant solution
to either \eqref{equ:static-1} or \eqref{equ:cpe}.
And suppose that  the augmented Cotton tensor $D$ vanishes. Then the level set $\Sigma$ is umbilical and the
mean curvature $H$ is constant.
\end{cor}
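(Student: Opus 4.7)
The plan is to read both conclusions directly out of the identity \eqref{levelset} of Proposition \ref{2-key}, together with one short traced-Codazzi step for the mean curvature. Observe first that the right-hand side of \eqref{levelset} is a sum of two non-negative quantities (each a squared norm weighted by a non-negative factor), so the hypothesis $D \equiv 0$ forces both of them to vanish separately. Because $c$ is a regular value, $W = |\nabla f|^2 > 0$ on $\Sigma$, so the vanishing of the first term gives $A = \tfrac{H}{n-1} g^\Sigma$, which is exactly umbilicity. The vanishing of the second term gives $|\nabla^\Sigma W|^2 = 0$, i.e.\ $W$ is constant along $\Sigma$.

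Constancy of $W$ on $\Sigma$ is really a statement about the mixed Hessian: differentiating $W = f^i f_i$ tangentially and using $\nabla f = W^{1/2} e_n$ gives $\nabla_\alpha W = 2 W^{1/2} f_{n,\alpha}$, so $f_{n,\alpha} = 0$ on $\Sigma$ for every tangent index $\alpha$. Substituting this into the off-diagonal $(n,\alpha)$ component of the unified equation \eqref{equ:both}, and using $g_{n\alpha} = 0$, yields $f R_{n\alpha} = f S_{n\alpha} = f_{n,\alpha} = 0$, hence $R_{n\alpha} = 0$ on $\Sigma$ wherever $f \neq 0$. To propagate this to the constancy of $H$, I would invoke the traced Codazzi-Mainardi identity: for an umbilic hypersurface with shape operator $\tfrac{H}{n-1}\,\mathrm{id}$, tracing the Codazzi equation produces a relation of the form $(n-2)\nabla^\Sigma_\alpha H = \pm (n-1) R_{n\alpha}$. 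Combined with $R_{n\alpha} = 0$, this forces $\nabla^\Sigma H = 0$ on the open subset of $\Sigma$ where $f \neq 0$, and hence on all of $\Sigma$.

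There is essentially no conceptual obstacle here once Proposition \ref{2-key} is in hand; the only delicate point is the possibility that the regular value $c$ equals $0$, in which case $f \equiv 0$ on $\Sigma$ and one cannot divide by $f$ to conclude $R_{n\alpha} = 0$ directly. I would handle this either by working on the open dense locus of the ambient manifold where $f \neq 0$ and passing to the limit along nearby regular level sets, or by reading off the tangential derivative of $H$ from the Hessian relation \eqref{equ:both} (via $f_{\alpha,\beta} = -\tfrac{H W^{1/2}}{n-1}g_{\alpha\beta}$) without dividing by $f$. The only routine calculation left is checking the precise constant in the traced Codazzi identity, which I would not belabor.
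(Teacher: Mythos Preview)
Your argument is correct and follows essentially the same route as the paper: read umbilicity and $\nabla^\Sigma W=0$ off from \eqref{levelset}, deduce $f_{n,\alpha}=0$, use \eqref{equ:both} to get $R_{n\alpha}=0$, and then apply the traced Codazzi identity (the paper's constant is $R_{\alpha n}=\tfrac{n-2}{n-1}\nabla^\Sigma_\alpha H$). Your extra care about the possibility $c=0$ is a nice point that the paper does not address explicitly.
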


\begin{proof}By the assumption we know that the solution $f$ can not be a constant. Therefore it follows from Lemma \ref{2-key}
that the level set $\Sigma$ is umbilical and $W$ is a constant along $\Sigma$ in the light of \eqref{levelset}. In fact
\begin{align*}
      |\nabla^\Sigma W|^2 = \sum_{\alpha= 1}^{n-1} |\nabla_\alpha W|^2
              = 4 W \sum_{\alpha = 1}^{n-1} |f_{n, \alpha}|^2.
\end{align*}
Hence, according to the equation \eqref{equ:both}, we conclude that
$R_{\alpha n} = 0$, for $\alpha=1,2,\cdots, n-1$.  On the other hand, by contracting the Codazzi equations we get
\begin{align*}
0= R_{\alpha n} = \frac{n-2}{n-1} \nabla^\Sigma_\alpha H, \ \ \ \alpha=1,2,\dots,n-1.
\end{align*}
Therefore the mean curvature $H$ is constant along $\Sigma$.
\end{proof}

Next we show the constancy of $R$ and $\Delta f$ along $\Sigma$.

\begin{lemma}\label{dR} Suppose that $(M^n, \ g)$ ($n\geq 3$) is a static space or a CPE metric with a non-constant
function $f$. Then
\begin{align}
\nabla^\Sigma R = \nabla^\Sigma \Delta f = 0.
\end{align}
\end{lemma}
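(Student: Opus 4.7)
The plan is to derive a single Bianchi-type identity relating $\nabla R$ and $\nabla \Delta f$, and then specialize to the two equations. First, I rewrite the unified equation \eqref{equ:both} in the equivalent Ricci form $f R_{ij} = f_{i,j} + \tilde{\Phi}\, g_{ij}$; taking trace forces $\tilde{\Phi} = (fR - \Delta f)/n$. Applying $\nabla^j$ to this identity, using the Ricci/Bochner identity $\nabla^j f_{i,j} = \nabla_i \Delta f + R_{ij} f^j$ together with the contracted second Bianchi identity $\nabla^j R_{ij} = \tfrac{1}{2}\nabla_i R$, the Ricci terms $R_{ij} f^j$ cancel from both sides, leaving the key relation
\[
(n-1)\,\nabla_i \Delta f \;=\; \tfrac{n-2}{2}\, f\, \nabla_i R \;-\; R\, \nabla_i f.
\]

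For a CPE metric, $R$ is constant on $M$ by definition of CPE, so $\nabla^\Sigma R = 0$ is immediate. Substituting $\nabla R = 0$ into the displayed identity gives $\nabla_i \Delta f = -\tfrac{R}{n-1}\nabla_i f$, a multiple of $\nabla f$. Since $\nabla f$ is normal to the level set $\Sigma$, its tangential components vanish, whence $\nabla^\Sigma \Delta f = 0$.

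For a vacuum static space, tracing \eqref{equ:vacuum-static-1} supplies the additional relation $\Delta f = -\tfrac{Rf}{n-1}$. Differentiating this gives $\nabla_i\Delta f = -\tfrac{1}{n-1}(f \nabla_i R + R \nabla_i f)$; plugging this back into the key identity, the $R\,\nabla_i f$ terms cancel and we are left with $\tfrac{n}{2}\, f\, \nabla_i R = 0$. Thus $\nabla R$ vanishes on the open set $\{f \neq 0\}$; since $f$ is a nontrivial solution of the elliptic equation \eqref{equ:vacuum-static-1} on a connected $M$, unique continuation forces $\{f \neq 0\}$ to be dense, and by continuity $\nabla R \equiv 0$ on $M$. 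With $R$ now constant, $\Delta f = -Rf/(n-1)$ is a function of $f$ alone, hence constant on each level set $\Sigma$. The main obstacle is precisely this last step in the vacuum static setting: promoting the pointwise identity $f\,\nabla R = 0$ to global constancy of $R$, which relies on unique continuation (or the density of $\{f \neq 0\}$) for the elliptic static equation.
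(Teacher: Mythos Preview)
Your derivation of the Bianchi-type identity
\[
(n-1)\,\nabla_i \Delta f \;=\; \tfrac{n-2}{2}\, f\, \nabla_i R \;-\; R\, \nabla_i f
\]
is correct and is exactly the relation the paper obtains (written there as $d(Rf+(n-1)\Delta f)=\tfrac{n}{2}f\,dR$). The CPE case is also handled correctly.

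The gap is in the static case. In this paper ``static space'' means a solution of the general equation \eqref{equ:static-1}, not the vacuum equation \eqref{equ:vacuum-static-1}; the lemma and all of Section~3 are stated for \eqref{equ:static-1}. Your argument invokes the extra trace relation $\Delta f = -Rf/(n-1)$, but that comes from tracing \eqref{equ:vacuum-static-1} and is \emph{not available} for general static spaces: the trace of \eqref{equ:static-1} is an identity, giving no constraint between $\Delta f$ and $Rf$. Consequently the step where you substitute $\nabla_i\Delta f=-\tfrac{1}{n-1}(f\nabla_i R+R\nabla_i f)$ and cancel to reach $f\,\nabla R=0$ simply fails in the general case, and the lemma is left unproven there.

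The paper's route avoids this by applying the exterior derivative to the displayed identity itself. Since $d(R\,df)=dR\wedge df$ and $d\bigl((n-1)d\Delta f\bigr)=0$, while $d\bigl(\tfrac{n-2}{2}f\,dR\bigr)=\tfrac{n-2}{2}\,df\wedge dR$, one obtains $\tfrac{n}{2}\,df\wedge dR=0$. By Cartan's lemma $dR=\phi\,df$ near any regular level set, hence $\nabla^\Sigma R=0$ immediately; feeding this back into the Bianchi identity along tangential directions gives $\nabla^\Sigma\Delta f=0$. This argument uses only the single divergence identity you already derived, works uniformly for \eqref{equ:static-1}, and also sidesteps your appeal to unique continuation.
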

\begin{proof} The statement of this lemma is obviously true for a CPE metric. For a static metric, taking divergence of the static equation 
\eqref{equ:static-1}, we have
\begin{align}
d(Rf + (n-1) \Delta f) = \frac{n}{2} f dR,
\end{align}
which implies
\begin{equation}\label{equ:dR}
(\frac{n}{2} - 1) f dR = R df + (n-1) d\Delta f.
\end{equation}
Taking exterior differential of the two sides of the above equation, we get $df \wedge dR = 0.$
Hence, by Cartan's lemma, there exists a smooth function $\phi$ such
that $dR = \phi df$, which implies $\nabla^\Sigma_\alpha R = \nabla_\alpha R = \phi \nabla_\alpha f
=0$, i.e. $\nabla^\Sigma R = 0$. Consequently, in the light of \eqref{equ:dR}, one also gets
$\nabla^\Sigma \Delta f = 0$.
\end{proof}

Consequently we know that the level set $\Sigma$ is of constant scalar curvature if the augmented Cotton tensor
vanishes. 

\begin{cor}\label{3dim} Suppose that $(M^n, \ g)$ ($n\geq 3$) is a static space or a CPE metric with a non-constant
function $f$.  And suppose that the augmented Cotton tensor 
$D$ vanishes identically.  Then the level set $\Sigma$ is of constant scalar curvature.
\end{cor}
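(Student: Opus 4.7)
The plan is to compute the scalar curvature $R^\Sigma$ via the Gauss equation and show each term on the right hand side is constant along $\Sigma$. Concretely, for a hypersurface with unit normal $e_n$ one has
\begin{equation*}
R^\Sigma = R - 2 R_{nn} + H^2 - |A|^2.
\end{equation*}
By Corollary \ref{mean-curvature} the vanishing of $D$ forces $\Sigma$ to be umbilical with constant mean curvature $H$, so $|A|^2 = H^2/(n-1)$ is already constant on $\Sigma$. Hence it suffices to prove that $R$ and $R_{nn}$ are constant along $\Sigma$.

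The constancy of $R$ along $\Sigma$ is exactly Lemma \ref{dR}. To handle $R_{nn}$, the key is to read off $R_{nn}$ from the unified equation \eqref{equ:both}. Contracting twice with $e_n$ and using $S_{nn} = R_{nn} - \tfrac{R}{2(n-1)}$, we get
\begin{equation*}
f R_{nn} = f_{n,n} + \Phi + \frac{R}{2(n-1)} f.
\end{equation*}
Since $f$ is constant on $\Sigma$ (trivially) and $R$ is constant on $\Sigma$ by Lemma \ref{dR}, it remains to verify that $f_{n,n}$ and $\Phi$ are constant along $\Sigma$.

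For $f_{n,n}$, I would use the formula \eqref{meancurvature}, rewritten as
\begin{equation*}
f_{n,n} = \Delta f + H\, W^{1/2}.
\end{equation*}
Here $\Delta f$ is constant on $\Sigma$ by Lemma \ref{dR}, while $H$ and $W$ are constant on $\Sigma$ by Corollary \ref{mean-curvature}; hence $f_{n,n}$ is constant along $\Sigma$. For $\Phi$, Remark \ref{rem:Phi} gives explicit expressions in both cases: in the static case $\Phi = \tfrac{1}{n}\bigl(\tfrac{n-2}{2(n-1)}Rf - \Delta f\bigr)$ and in the CPE case $\Phi = \tfrac{Rf}{2(n-1)} - \tfrac{R}{n(n-1)}$; either way, $\Phi$ is a function of $R$, $f$, and (for static) $\Delta f$, all of which are constant along $\Sigma$. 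Putting everything together, $R_{nn}$ is constant along $\Sigma$, and the Gauss equation yields $R^\Sigma$ constant. The only slightly delicate point is making sure the unified treatment really works in both the static and the CPE case — but that is already packaged in \eqref{equ:both} and Remark \ref{rem:Phi}, so no case-by-case argument is actually needed beyond noting the formula for $\Phi$.
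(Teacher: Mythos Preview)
Your proof is correct and follows essentially the same route as the paper: invoke the Gauss equation $R^\Sigma = R - 2R_{nn} + H^2 - |A|^2$, use Corollary \ref{mean-curvature} and Lemma \ref{dR} to handle $R$, $H$, $|A|^2$, then show $f_{n,n}$ is constant via \eqref{meancurvature} and read off $R_{nn}$ from the equation \eqref{equ:both}. You have simply spelled out more of the details (the explicit role of $\Phi$ and the constancy of $W$ along $\Sigma$) than the paper's terse version.
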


\begin{proof}  Recall Gauss equation
\begin{align*}
      R^\Sigma = R - 2 R_{nn} + H^2 - |A|^2.
\end{align*}
Hence it suffices to show that $R_{nn}$ to be constant along $\Sigma$ in the light of Corollary \ref{mean-curvature} and Lemma \ref{dR}.
To do that we first realize that $f_{n,n}$ is constant from \eqref{meancurvature}. Then the conclusion follows from the static equation \eqref{equ:static-1}
or critical point equation \eqref{equ:cpe}.
\end{proof}

To work a bit harder we can show that in fact the level set $\Sigma$ is Einstein when the augmented Cotton tensor vanishes.

\begin{prop}\label{>3} Suppose that $(M^n, \ g)$ ($n\geq 3$) is a static space or a CPE metric with a non-constant
function $f$.  And suppose that the augmented Cotton tensor $D$ vanishes identically. Then the level set $\Sigma$ is Einstein.
\end{prop}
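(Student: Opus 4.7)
The plan is to combine Corollary \ref{mean-curvature} with \eqref{equ:both} via a local warped product representation of the metric. First I would introduce Fermi-type coordinates $(s,y)$ in a neighborhood of a regular level set $\Sigma = f^{-1}(c)$, where $s$ is arclength along integral curves of $\nabla f/|\nabla f|$ and $y$ are local coordinates on $\Sigma$. Then $g = ds^2 + h_s$, and the second fundamental form of each leaf reads $h_{\alpha\beta} = -\tfrac{1}{2}\partial_s (h_s)_{\alpha\beta}$. By Corollary \ref{mean-curvature}, each leaf is umbilical with $h_{\alpha\beta} = \tfrac{H(s)}{n-1}(h_s)_{\alpha\beta}$ and $H(s)$ constant on each leaf, so integrating the resulting ODE $\partial_s h_s = -\tfrac{2H(s)}{n-1}h_s$ yields
\[
h_s = r(s)^2\, h_0, \qquad r(s) = \exp\!\Big(-\tfrac{1}{n-1}\int_0^s H(t)\,dt\Big).
\]
Hence $g$ takes the local warped product form $ds^2 + r(s)^2 h_0$ with base $(\Sigma, h_0)$.

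Next I would compute the tangential Ricci in two ways. The standard warped product formula gives
\[
R_{\alpha\beta} = R^{h_0}_{\alpha\beta} - \bigl[(n-2)(r')^2 + r r''\bigr](h_0)_{\alpha\beta}.
\]
On the other hand, applying \eqref{equ:both} to tangential indices and using $f_{\alpha,\beta} = -h_{\alpha\beta}|\nabla f|$ together with umbilicity yields
\[
fR_{\alpha\beta} = \Big(\Phi + \tfrac{fR}{2(n-1)} - \tfrac{H|\nabla f|}{n-1}\Big)\, g_{\alpha\beta}.
\]
Since $f$, $|\nabla f|$, $H$, $R$, and $\Phi$ all depend only on $s$ (by Corollary \ref{mean-curvature} and Lemma \ref{dR}), this reads $R_{\alpha\beta} = \mu(s)\, r(s)^2 (h_0)_{\alpha\beta}$ for some scalar function $\mu(s)$. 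Equating the two expressions forces $R^{h_0}_{\alpha\beta}(y) = \kappa(s)(h_0)_{\alpha\beta}(y)$, and since the left side is independent of $s$, the factor $\kappa$ must be a constant. Consequently $(\Sigma, h_0)$ is Einstein, and therefore so is each leaf $(\Sigma, r(s)^2 h_0)$, in particular the original level set.

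The main technical step is verifying the local warped product representation from the umbilicity with constant mean curvature; this reduces to a first-order linear ODE and is routine but deserves care. The remainder of the argument is a direct comparison of two expressions for the tangential Ricci, exploiting that by the previous results every ambient scalar derived from $f$ and $R$ is a function of $s$ alone on the regular foliation region.
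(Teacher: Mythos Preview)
Your argument is correct and takes a genuinely different route from the paper. The paper proceeds by first showing $W_{njkn}=0$: from $D=0$ one has $W_{ijkl}f^if^l=fC_{ijk}f^i$, while from \eqref{D-bach} together with Bach flatness one also gets $W_{ijkl}f^if^l=-\tfrac{n-3}{n-2}fC_{ijk}f^i$, forcing both to vanish. With $W_{njkn}=0$ in hand, the paper combines the Riemann decomposition, \eqref{equ:both}, and the Gauss equation to see that $R^\Sigma_{\alpha\beta}$ is a multiple of $g_{\alpha\beta}$, concluding via Schur for $n\ge4$ and via Corollary~\ref{3dim} for $n=3$. Your approach instead leverages Corollary~\ref{mean-curvature} (and the constancy of $W$ on leaves) to integrate the umbilicity condition to a local warped product $g=ds^2+r(s)^2h_0$, and then reads off the Einstein condition on $h_0$ by comparing the warped-product Ricci formula with the expression for $R_{\alpha\beta}$ coming from \eqref{equ:both}. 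This has two advantages: it treats all $n\ge3$ uniformly, and it uses only the hypothesis $D\equiv0$ as stated, whereas the paper's proof tacitly invokes Bach flatness (consistent with the ambient assumptions of the section, but stronger than the proposition's hypotheses). The paper's approach, on the other hand, avoids setting up the warped coordinates and stays at the level of curvature identities; it also isolates the useful pointwise fact $W_{njkn}=0$, which is reused later. Your one step that deserves a line of justification is that the constant-$s$ hypersurfaces in your Fermi coordinates really are the level sets of $f$; this follows because $|\nabla f|$ is constant on each leaf, so $f$ solves the autonomous ODE $df/ds=|\nabla f|(f)$ along every normal geodesic and hence depends on $s$ alone.
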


\begin{proof} We start with the assumption that $D=0$. Hence, from the definition \eqref{DCW}, we have
$$
W_{ijkl}f^l f^i= f C_{ijk}f^i.
$$
On the other hand, from Bach flatness and Proposition \ref{prop:Bach}, we also have
$$
 W_{ijkl}f^if^l =  - \frac {n-3}{n-2} f C_{ijk}f^i.
 $$
Therefore we can conclude that $W_{ijkl}f^if^l = 0$, that is, $W_{njkn} = 0$. Using the Riemann curvature decomposition we derive
\begin{align*}
R_{\alpha n\beta n} & = W_{\alpha n\beta n} + \frac 1{n-2}R_{\alpha\beta}  + \frac 1{n-2}(S_{nn} - \frac 1{2(n-1)} R) g_{\alpha\beta}\\
& = \frac 1{n-2}R_{\alpha\beta}  + \frac 1{n-2}(R_{nn} - \frac 1{n-1} R) g_{\alpha\beta}.
\end{align*}
Meanwhile, from the equation \eqref{equ:both}, we obtain
\begin{align*}
R_{\alpha\beta} &= \frac{\nabla_\alpha\nabla_\beta f}{f} + \frac{1}{n} ( R -
\frac{\Delta f}{f} ) g_{\alpha\beta} \\
&= -\frac{|\nabla f|}{f} h_{\alpha\beta} + \frac{1}{n} ( R - \frac{\Delta
f}{f} ) g_{\alpha\beta}\\ &= (\frac{1}{n} ( R - \frac{\Delta f}{f} )-
\frac{H}{n-1}\frac{|\nabla f|}{f}) g_{\alpha\beta}.
\end{align*}
Finally, using Gauss equation,
\begin{align*}
R^\Sigma_{\alpha\beta} = R_{\alpha\beta} - R_{\alpha n\beta n} + H h_{\alpha\beta} - h_{\alpha\gamma} h^\gamma_{\ \ \beta}
\end{align*}
we can conclude that $\Sigma$ is Einstein by Schur's lemma when $n\geq 4$. Notice that Corollary \ref{3dim} implies the proposition 
when $n=3$. Thus the proof is complete.
\end{proof}

We now summarize what we have achieved in the following local splitting result for the
geometric structure of a static metric or a CPE metric (cf. Theorem 3.1 in \cite{K-O}).

\begin{thm}\label{thm:>3}
Suppose that $(M^n, \ g)$ is a static space or CPE manifold with non-constant function $f$ and compact level set $f^{-1}(c)$
for a given regular value $c$. And assume it is
Bach flat. Then
\begin{align*}
g = ds^2 + (r(s))^2 g_E,
\end{align*}
nearby the level set $f^{-1}(c)$, where $ds = \frac{df}{|d f|}$, $(r(s))^2 g_E =
g|_{f^{-1}(c)}$ and $g_E$ is an Einstein metric.
\end{thm}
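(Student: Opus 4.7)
The plan is to assemble Corollary \ref{D-cotton}, Corollary \ref{mean-curvature}, and Proposition \ref{>3} into geodesic normal coordinates anchored at $f$. Since $c$ is a regular value and $f^{-1}(c)$ is compact, there is a tubular neighborhood $U$ of $f^{-1}(c)$ on which $\nabla f$ never vanishes. Bach flatness together with Corollary \ref{D-cotton} forces $D \equiv 0$ on $U$, so Proposition \ref{2-key} implies, on every level set $\Sigma \subset U$, both the umbilicity $A = \tfrac{H}{n-1}\, g^\Sigma$ and the vanishing $|\nabla^\Sigma W|^2 = 0$. Corollary \ref{mean-curvature} then upgrades this to constancy of $H$ on $\Sigma$, and Proposition \ref{>3} says the induced metric on each $\Sigma$ is Einstein.

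Because $W = |\nabla f|^2$ is constant on each level set, it is a function of $f$ alone on $U$. I reparametrise by arc length along the gradient flow: setting $ds = df/|\nabla f|$, the vector field $\partial_s = \nabla f/|\nabla f| = e_n$ is a unit normal to every level set and its integral curves are unit-speed geodesics. Its flow $\phi_s$ gives a diffeomorphism $f^{-1}(c) \times (-\varepsilon, \varepsilon) \to U$ (for some $\varepsilon > 0$ uniform in the basepoint, by compactness of $f^{-1}(c)$), and in these coordinates the metric splits as $g = ds^2 + g_s$, where $g_s$ denotes the pullback to $f^{-1}(c)$ of the induced metric on the fibre $\{s = \mathrm{const}\}$.

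Using the sign convention $h_{\alpha\beta} = -\langle e_\beta, \nabla_{e_\alpha} e_n\rangle$ recorded in the preliminaries, the fibre metric evolves by $\partial_s g_s = -2h$. Umbilicity and the constancy of $H$ along each fibre reduce this to the scalar ODE
\begin{equation*}
\partial_s g_s = -\frac{2H(s)}{n-1}\, g_s,
\end{equation*}
in which $H(s)$ depends only on $s$. Integrating fibrewise gives $g_s = r(s)^2\, g_E$ with $g_E := g_0$ and $r(s) := \exp\bigl(-\tfrac{1}{n-1}\int_0^s H(\tau)\, d\tau\bigr)$. Since each $g_s$ is Einstein and the Einstein condition is invariant under constant rescaling, $g_E$ itself is Einstein, which establishes the warped product form claimed.

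The main obstacle is not analytical but organisational: most of the content of this theorem sits in the preceding propositions, and what remains is to verify that $s$ really defines a coordinate on a uniform tubular neighborhood of $f^{-1}(c)$ and that $H(s)$ is genuinely fibre-independent so that the ODE above is scalar. Both points reduce to the compactness of $f^{-1}(c)$ combined with the $\Sigma$-constancy statements already obtained, so the argument is essentially a clean assembly rather than a new computation.
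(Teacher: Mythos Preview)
Your proposal is correct and follows the paper's approach: the paper presents Theorem \ref{thm:>3} merely as a summary of the preceding results (Corollary \ref{D-cotton}, Corollary \ref{mean-curvature}, Proposition \ref{>3}) with a reference to Theorem 3.1 in \cite{K-O}, while you explicitly carry out the passage to warped-product coordinates via the gradient flow and the evolution $\partial_s g_s = -2h$. One small point you might make explicit is that the integral curves of $e_n$ are geodesics precisely because $f_{n,\alpha}=0$, which is exactly the consequence of $\nabla^\Sigma W=0$ you have already recorded; otherwise the argument is a faithful and somewhat more detailed version of what the paper intends.
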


Consequently, based on the solutions to the corresponding ODE given in \cite{Kobayashi}, one gets the classification theorem for 
Bach flat vacuum static spaces. Notice that the function $f$ and the warping factor $r$ still satisfy the same ODE system:
\begin{equation*}
\begin{cases}
f'' + (n-1)\frac {r'}r f' + \frac R{n-1}f & = 0\\
r'f'- r'' f & = 0
\end{cases}
\end{equation*}
which is (1.9) in \cite{Kobayashi}. It is remarkable that Kobayashi was able to find the integrals and completely solved it. The solutions depend on
the constants $R$, 
$$
a = r^{n-1}r'' + \frac R{n(n-1)}r^n,
$$
and
$$
k = (r')^2 + \frac R{n(n-1)}r^2 + \frac {2a}{n-2}r^{2-n}.
$$
The horizontal slice $E$ is Einstein with $Ric = (n-2)kg_E$ here.

\begin{thm}\label{classification}
Let $(M^n,g,f)$ be a Bach flat vanuum static space with compact
level sets ($n \geq 3$). Then up to a finite quotient
and appropriate scaling,

(i) $f$ is a non-zero constant if and only if $M$ is Ricci flat;

(ii) $f$ is non-constant if and only if $M$ is isometric to

     \begin{itemize}
        \item $S^n$;
        \item $\mathbb{H}^n$;
        \item the warped product cases.
     \end{itemize}
In the warped product cases, we can divide again into compact and non-compact ones. For the compact ones
$S^1 \times_r E$ with metric $g = ds^2 + (r(s))^2 g_E,$  $r(s)$ appears to be one of the following:

\begin{itemize}
\item $r(s)$ is a constant and $E$ is an arbitrary compact Einstein
manifold of positive scalar curvature without boundary (cf. Example 2 in \cite{Kobayashi});
\item $r(s)$ is non-constant and periodic and $E$ is an arbitrary compact
Einstein manifold of positive scalar curvature without boundary (cf. Example 4 in \cite{Kobayashi}).
\end{itemize}
For the non-compact ones $\mathbb{R} \times_r E$ with metric
$g = ds^2 + (r(s))^2 g_E,$ $r(s)$ appears to be one of the following:

\begin{itemize}
\item $r(s)$ is a constant and $E$ is an arbitrary compact Einstein
manifold without boundary (cf. Example 1 in \cite{Kobayashi});
\item $r(s)$ is non-constant and peroidic and $E$ is an arbitrary compact
Einstein manifold of positive scalar curvature without boundary (cf. Example 3 in \cite{Kobayashi});
\item $r(s)$ is given in Proposition 2.5 in \cite{Kobayashi} and $E$ is an arbitrary compact Einstein
manifold without boundary (cf. Example 5 in \cite{Kobayashi}) .
\end{itemize}
\end{thm}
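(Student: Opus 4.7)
\emph{Plan.} The approach is to reduce Theorem~\ref{classification} to the ODE analysis carried out by Kobayashi in~\cite{Kobayashi} by combining the vanishing of the augmented Cotton tensor (Corollary~\ref{D-cotton}) with the local splitting Theorem~\ref{thm:>3}.

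For statement (i), if $f$ is a nonzero constant then $\nabla^2 f = 0$ and \eqref{equ:vacuum-static-1} reduces to $f\bigl(Ric - \tfrac{R}{n-1}g\bigr) = 0$. Since $f \neq 0$, tracing yields $R = 0$ and hence $Ric = 0$; conversely any Ricci flat metric trivially admits a nonzero constant solution (and is automatically Bach flat).

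For statement (ii), assume $f$ is non-constant. Bach flatness together with Corollary~\ref{D-cotton} forces $D \equiv 0$ on $M$. Theorem~\ref{thm:>3} then provides, near each regular level set, the local warped product description $g = ds^2 + r(s)^2 g_E$ with $(E,g_E)$ Einstein. I would globalize this structure by flowing along $\nabla f/|\nabla f|^2$: compactness of regular level sets and connectedness of $M$ imply that every regular level set is diffeomorphic to a fixed compact Einstein manifold $E$, and the metric takes the form $g = ds^2 + r(s)^2 g_E$ on a maximal open interval of regular values of $f$. Substituting this ansatz together with $f = f(s)$ back into \eqref{equ:vacuum-static-1} reproduces Kobayashi's ODE system (1.9), together with the relation $Ric_{g_E} = (n-2)k\, g_E$. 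Kobayashi completely integrates the system via the conserved quantities $a$ and $k$, and each of his solution families is reproduced in the statement of the theorem.

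The substantive content lies in two globalization issues. First, one must verify that the Einstein factor $(E, g_E)$ produced by Theorem~\ref{thm:>3} is the same across all regular values of $f$; this is a monodromy-type argument along the gradient flow, using that the local warped-product decomposition is rigid once $D = 0$ and Corollary~\ref{mean-curvature} and Proposition~\ref{>3} hold on every regular level. Second, at critical points of $f$ the warped product coordinate degenerates, and one must show $r(s) \to 0$ with $r'(s) \to \pm 1$ so that the metric extends smoothly across a geodesic cap. The cap condition forces the local model there to be a round or hyperbolic ball, giving the $S^n$ and $\mathbb{H}^n$ cases; in the absence of critical points (so $r > 0$ throughout), the $s$-domain is either $S^1$ or $\mathbb{R}$, yielding the remaining warped product families in Kobayashi's list. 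This passage between local splitting and global topology is, I expect, the principal obstacle, though it follows standard arguments for smooth extensions of warped product metrics once the ODE integrals $a$ and $k$ are in hand.
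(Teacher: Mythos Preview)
Your proposal is correct and follows essentially the same route as the paper: the paper deduces Theorem~\ref{classification} from Theorem~\ref{thm:>3} by observing that the warped-product ansatz reproduces Kobayashi's ODE system (1.9) in \cite{Kobayashi} with integrals $a$ and $k$, and then simply invokes Kobayashi's complete solution of that system. Your treatment of case (i) and your explicit flagging of the two globalization issues (constancy of the Einstein factor across level sets, and the smooth-cap analysis at critical points) are in fact more detailed than what the paper writes; the paper absorbs those points into the citation of \cite{Kobayashi}.
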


\begin{remark} We would like to mention again, since we only assume Bach flatness, our
list includes the warped metric where the level sets are only Einstein instead of constant curvature
as in \cite{Kobayashi, Lafontaine} .
\end{remark}

On the other hand, as a consequence of Theorem \ref{thm:>3}, a Bach flat CPE metric
turns out to be of harmonic Riemann curvature. Namely,

\begin{lemma}\label{warped_cotton}
Suppose the metric $g$ is a CPE metric satisfying assumptions in Theorem \ref{thm:>3}.  
Then the Cotton tensor $C$ of $g$ vanishes identically and therefore $g$ is of harmonic Riemann curvature.
\end{lemma}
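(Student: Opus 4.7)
The plan is to combine the warped product decomposition from Theorem \ref{thm:>3} with the vanishing of the augmented Cotton tensor $D$ supplied by Corollary \ref{D-cotton}. Recalling \eqref{DCW}, the identity $D_{ijk} = f^2 C_{ijk} - f W_{ijkl} f^l = 0$ means that once we can show $W_{ijkl} f^l \equiv 0$, the Cotton tensor must vanish on the regular set $\{f \neq 0\}$, and then everywhere on $M$ by continuity. So the main task is to verify that $W_{ijkl} f^l = 0$ in the warped-product neighborhoods furnished by Theorem \ref{thm:>3}.

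First I would localize near a regular level set, where Theorem \ref{thm:>3} gives $g = ds^2 + r(s)^2 g_E$ with $g_E$ Einstein, $f = f(s)$, and $\nabla f = f'(s) e_n$ for $e_n = \partial_s$. Writing $f^l = f'(s)\delta^l_n$, the problem reduces to showing $W_{ijkn} = 0$ for every choice of the remaining three indices. A standard warped product computation yields $R_{\alpha\beta\gamma n} = 0$ and $R_{\alpha n} = 0$ (with Greek indices running over fiber directions), so in particular $S_{\alpha n} = 0$, and the decomposition \eqref{rie-weyl-schouten} immediately produces $W_{\alpha\beta\gamma n} = 0$.

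The only other Weyl component contributing to $W_{ijkn}$ is of the form $W_{\alpha n \beta n}$, and this is handled by the identity $W_{njkn} = 0$ that was already extracted in the first paragraph of the proof of Proposition \ref{>3} from the combination of $D = 0$ and Bach flatness. Using the symmetries $W_{ijkl} = -W_{jikl} = -W_{ijlk} = W_{klij}$ one checks that $W_{\alpha n \beta n}$ and $W_{n \beta \gamma n}$ are just reindexings of $W_{njkn}$, so every $W_{ijkn}$ vanishes throughout the warped-product neighborhood. Consequently $W_{ijkl} f^l \equiv 0$ there, and $D = 0$ forces $f^2 C_{ijk} \equiv 0$. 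Thus $C$ vanishes wherever $f \neq 0$, and by continuity throughout the neighborhood; since regular values are dense by Sard's theorem, the union of such neighborhoods is dense in $M$ and one more application of continuity gives $C \equiv 0$ on all of $M$.

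For the harmonic-curvature conclusion, I would invoke that a CPE metric has constant scalar curvature by definition, so $C_{ijk} = \nabla_i S_{jk} - \nabla_j S_{ik} = 0$ is equivalent to the Codazzi condition $\nabla_i R_{jk} = \nabla_j R_{ik}$, which by the contracted second Bianchi identity is the same as $\nabla^l R_{ijkl} = 0$. The main obstacle I anticipate is the careful \emph{bookkeeping} of Weyl components in the warped-product frame, that is, confirming that $R_{\alpha\beta\gamma n} = 0$ and $R_{\alpha n} = 0$ in warped products and then exhausting all index arrangements of $W_{ijkn}$ via Weyl symmetries; once these elementary checks are in place, the rest of the argument is essentially a formal consequence of Corollary \ref{D-cotton} and the Bach-flat calculation already present in the paper.
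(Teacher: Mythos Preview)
Your argument is correct and reaches the same conclusion, but it is organized differently from the paper's proof. The paper verifies the vanishing of the Cotton tensor component by component in the warped-product frame: the components $C_{\alpha\beta\gamma}$, $C_{\alpha\beta n}$, $C_{n\beta n}$ are read off directly from the warped product with Einstein fibre (and in particular the vanishing of $C_{\alpha\beta\gamma}$ uses that $g_E$ is Einstein), while the remaining component $C_{n\beta\gamma}$ is killed by plugging $B=0$, $D=0$, and $W_{njkn}=0$ into the identity \eqref{D-bach}. Your route instead bypasses any direct computation of Cotton components: you exploit the algebraic relation $D_{ijk}=f^2C_{ijk}-fW_{ijkl}f^l$ together with $D\equiv 0$ to reduce everything to showing $W_{ijkn}=0$, and then obtain $W_{\alpha\beta\gamma n}=0$ from the bare warped-product curvature identities $R_{\alpha\beta\gamma n}=0$, $R_{\alpha n}=0$, with $W_{njkn}=0$ already in hand from Proposition~\ref{>3}. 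This is slightly more economical in that it never appeals to the Einstein condition on the fibre; the paper's approach is more hands-on but makes the role of the fibre geometry explicit. Your derivation of harmonic curvature from $C=0$ and constant scalar curvature is the same as the paper's. One small remark: your density argument via Sard strictly speaking gives that regular \emph{values} are dense, not that regular \emph{points} fill a dense set of $M$; here the ellipticity of the CPE equation (or analyticity) is what guarantees the critical set of $f$ is nowhere dense, so the continuity extension goes through.
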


\begin{proof}
We simply choose a local coordinate system $\{ \partial_1, \partial_2, \cdots, \partial_{n-1},\partial_n = \partial_s \}$ and calculate directly. It is easily seen that
\begin{align*}
C_{\alpha\beta\gamma}=C_{\alpha\beta n} = C_{n\beta n} = 0.
\end{align*}
The only term that needs some effort is $C_{n\beta\gamma}$, which in fact is seen to be zero from \eqref{D-bach} and the 
fact that both Bach tensor and the augmented Cotton tensor $D$ are identically zero. Notice that $W_{njkn}$ is known to be identically zero from 
the proof of Proposition \ref{>3}. To see the harmonicity of Riemann curvature we calculate as follows:
\begin{align*}
R_{ijkl,}^{\quad \ \ l} & = W_{ijkl,}^{\quad \ \ l} + \frac 1{n-2}(S_{ik}g_{jl} - S_{il}g_{jk} - S_{jk}g_{il} + S_{jl}g_{ik})_{,}^{ \ l}\\
& = - \frac {n-3}{n-2}C_{ijk} + \frac 1{n-2} (S_{ik,j} - S_{jk,i}) \\
& = -C_{ijk} = 0
\end{align*}
using the fact that the Ricci scalar curvature $R$ is constant.
\end{proof}

Then, using the result in \cite{Yun_Huang}, we can verify Conjecture \ref{besse} for Bach flat manifolds.

\begin{thm}\label{besse-conj-1} Suppose that $(M^n, \ g)$ ($n\geq 3$) is Bach flat CPE manifold admitting a non-constant solution to \eqref{equ:cpe}. 
Then $(M^n, \ g)$ is isometric to a round sphere.
\end{thm}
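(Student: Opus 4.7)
The plan is to pipeline the results already assembled in Section 3 and then invoke the theorem of Chang, Hwang and Yun from \cite{Yun_Huang} as the final black box. Concretely, I would show first that the augmented Cotton tensor $D$ vanishes identically, then upgrade this to the vanishing of the ordinary Cotton tensor $C$, and finally quote the harmonic--curvature case of Conjecture \ref{besse}.

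First, since $M^n$ is compact (which is the standard setting of Conjecture \ref{besse}), every level set $f^{-1}(c)$ is automatically compact as a closed subset of $M^n$, so Corollary \ref{D-cotton} applies and yields $D \equiv 0$ on $M^n$. Next, let $\Omega = \{x \in M^n : df(x) \neq 0\}$. Because $f$ is smooth and non-constant on a compact manifold, Sard's theorem guarantees that $\Omega$ is open and dense. Near any point of $\Omega$ we may work in a neighborhood of a regular level set, so Theorem \ref{thm:>3} gives a local warped product decomposition $g = ds^2 + r(s)^2 g_E$ with $g_E$ Einstein. Lemma \ref{warped_cotton} then forces the Cotton tensor $C$ to vanish on all of $\Omega$, and by continuity $C \equiv 0$ on $M^n$.

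Since for a CPE metric the Ricci scalar curvature $R$ is constant, the vanishing of $C$ is equivalent to $g$ having harmonic Riemann curvature. The theorem of Chang, Hwang and Yun in \cite{Yun_Huang} verifies Conjecture \ref{besse} under exactly this hypothesis, so $(M^n, g)$ must be Einstein; combined with the hypothesis that $f$ is non-constant, the elementary discussion following Conjecture \ref{besse} in the introduction forces $(M^n, g)$ to be isometric to a round sphere.

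There is no real obstacle here, because the serious curvature analysis has already been carried out in Proposition \ref{>3}, Theorem \ref{thm:>3}, and Lemma \ref{warped_cotton}. The only step that requires a moment's care is the density argument promoting the vanishing of $C$ from $\Omega$ to all of $M^n$, but this follows at once from the continuity of $C$ together with the openness and density of $\Omega$ guaranteed by Sard's theorem.
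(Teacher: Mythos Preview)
Your argument is correct and matches the paper's approach exactly: Corollary \ref{D-cotton} gives $D\equiv 0$, Theorem \ref{thm:>3} and Lemma \ref{warped_cotton} then force $C\equiv 0$ (hence harmonic Riemann curvature, since $R$ is constant), and one finishes by quoting \cite{Yun_Huang}. The only quibble is that the density of $\Omega=\{df\neq 0\}$ is not a consequence of Sard's theorem (which controls critical \emph{values}, not critical points); it follows instead from unique continuation for the linear elliptic equation $\Delta f=-\tfrac{R}{n-1}(f-1)$ obtained by tracing \eqref{equ:cpe}.
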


\section{In dimension 3}

In dimension 3 we recall that the Bach tensor is given as the divergence of the Cotton tensor in \eqref{bach-3}. What we will do in this section is to establish 
another integral identity on compact manifold with a static metric or a CPE metric. Then we will be able to conclude that the full divergence
$B_{ij,}^{\quad ij}$ of the Bach tensor (the full divergence $C_{ijk,}^{\quad \ ijk}$  of the Cotton tensor) vanishes if and only if the Cotton tensor vanishes in dimension
3 for a static metric as well as a CPE metric on a compact manifold.  

\begin{prop} Suppose that $(M^n, \ g)$ ($n\geq 3$) is a compact Riemannian manifold with no boundary admitting a non-constant smooth 
solution to \eqref{equ:both}. Then,  for any $p \geq 2$, we have the following integral identity:
\begin{equation} \label{int_id_2} 
\int_M f^p B_{ij,}^{\quad ij} = -
\frac{p(n-4)}{2(n-1)(n-2)} \int_M  f^{p-2} D\cdot C .
\end{equation}
\end{prop}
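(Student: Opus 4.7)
The plan is to reduce the double divergence $B_{ij,}^{\ ij}$ to an expression involving $D\cdot C$ via integration by parts, using Proposition \ref{prop:Bach} as the key tool. Since $M$ is closed, two integrations by parts yield
$$\int_M f^p B_{ij,}^{\ ij}=\int_M (\nabla^i\nabla^j f^p)\,B_{ij}.$$
Expanding $\nabla^i\nabla^j f^p=p(p-1)f^{p-2}f^if^j+pf^{p-1}f^{i,j}$, using \eqref{equ:both} to rewrite $f^{i,j}=fS^{ij}-\Phi g^{ij}$, and invoking the trace-free property of $B$, the integral splits as
$$p(p-1)\int_M f^{p-2}f^if^jB_{ij}+p\int_M f^pS^{ij}B_{ij}.$$

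I would then substitute the formula for $(n-2)B_{jk}$ from \eqref{D-bach} into each of the two integrals. For the first, the contraction with $f^if^j$ kills the Weyl and Cotton terms of \eqref{D-bach} by antisymmetry (the Weyl tensor is antisymmetric in its first pair and the Cotton tensor is antisymmetric in its first two indices), leaving only the $\nabla^i(D_{ijk}/f^2)$ piece. A further integration by parts, combined with the antisymmetry of $D$ in $i,j$, the symmetry of the Hessian, and the trace-free identity $g^{ik}D_{ijk}=0$, reduces this to an integral of $f^{p-4}D_{ijk}f^{i,k}f^j$. For the second integral, all three terms of \eqref{D-bach} contribute non-trivially; the Weyl derivatives that arise are handled by the contracted Bianchi identity \eqref{weyl-cotton}, namely the companion identity $W_{ijkl,}^{\ \ \ j}=-\tfrac{n-3}{n-2}C_{kli}$ which follows from \eqref{weyl-cotton} via the pair-exchange symmetry of $W$.

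Using $D_{ijk}=f^2C_{ijk}-fW_{ijkl}f^l$ to rewrite the remaining Weyl--Cotton pairings as contractions $D\cdot C$, and combining the coefficient $\tfrac{n-3}{n-2}$ from the middle term of \eqref{D-bach} with the compensating $-\tfrac{1}{n-2}$ coming from the Weyl-divergence identity, the dimensional factor $\tfrac{n-4}{n-2}$ emerges, producing the claimed coefficient $-\tfrac{p(n-4)}{2(n-1)(n-2)}$. This is consistent with the fact that the Bach tensor is divergence-free in dimension four, so that both sides vanish when $n=4$.

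The main obstacle is the combinatorial bookkeeping to isolate the $(n-4)$ factor. Iterated covariant derivatives acting on the antisymmetric tensors $D$ and $C$ produce Riemann-curvature commutators through the Ricci identity; these must be decomposed via the Weyl--Schouten splitting \eqref{rie-weyl-schouten}, and the Schouten part must be shown to cancel against analogous contributions from the other terms in \eqref{D-bach} by repeated use of the defining equation \eqref{equ:both}. The surviving Weyl contributions then combine precisely with the $\tfrac{n-3}{n-2}$ coefficient to produce the $(n-4)$ factor in the final identity.
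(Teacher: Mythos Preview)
Your opening move—two integrations by parts to write $\int_M f^p B_{ij,}^{\ ij}=\int_M(\nabla^i\nabla^j f^p)B_{ij}$—agrees with the paper's, and your handling of the $\int f^{p-2}B_{ij}f^if^j$ piece is correct: the Weyl and Cotton terms of \eqref{D-bach} do vanish against $f^jf^k$, and what remains integrates by parts to $\int f^{p-4}D_{ijk}f^{i,k}f^j$. The gap is in the second piece. By substituting $f^{i,j}=fS^{ij}-\Phi g^{ij}$ you replace $\int f^{p-1}B_{ij}f^{i,j}$ with $\int f^p S^{ij}B_{ij}$, and this is precisely where the calculation becomes the ``combinatorial bookkeeping'' you describe: contracting \eqref{D-bach} with $S^{jk}$ produces terms such as $\int f^{p-2}S^{jk}W_{ijkl}f^if^l$ and (after a further integration by parts) $\int f^{p-3}f^iS^{jk}D_{ijk}$, neither of which reduces to $D\cdot C$ without re-inserting the equation and generating the curvature commutators you mention. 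You have not shown that these additional terms cancel, and the paper's argument suggests this detour is unnecessary.

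The paper never passes to $S^{ij}$. It keeps $\int f^{p+1}B_{ij}f^{i,j}$ intact and substitutes \eqref{D-bach} directly; each of the three resulting integrals is reduced, by a single integration by parts and the antisymmetries of $C$, $D$, $W$, to a linear combination of $\int f^{p-2}|D|^2$, $\int f^p D\cdot C$, and $\int f^p B_{ij}f^if^j$. No Ricci identities or Weyl--Schouten decompositions appear. The crucial step you are missing is Proposition~\ref{int_id_1}, which converts $\int f^{p-2}|D|^2$ back into $\int f^p B_{ij}f^{i,j}$; once this is used, the system of three integral relations closes and the $(n-4)$ factor falls out from the sum $\tfrac12-\tfrac{n-3}{2(n-1)}-\tfrac{n-2}{2(n-1)}$ of the three $D\cdot C$ coefficients. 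Keeping the Hessian $f^{i,j}$ rather than the Schouten tensor is what makes this closure possible.
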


\begin{proof} First, applying integrating by part twice, we get
\begin{equation}\label{1-equ}
\int_M f^p  B_{ij} f^i f^j = \frac{1}{(p+1)(p+2)}\int_M f^{p+2}B_{ij,}^{\quad ij}
 - \frac{1}{p+1}\int_M f^{p+1} B_{ij}f^{i,j}.
\end{equation}
Then we use Proposition \ref{prop:Bach} to calculate the second term in the right hand side of the above equation.
Namely,
\begin{align*}
(n-2) \int_M f^{p+1} B_{ij}f^{i,j} &  = - \int_M f^{p+1}  \nabla^k (\frac{D_{kij}}{f^2})
f^{i,j} + \frac {n-3}{n-2} \int_M f^p C_{kij}
f^kf^{i,j} \\ & +  \int_M f^{p-1} W_{iklj} f^k f^l f^{i,j}.
\end{align*}
Now we deal with each term separately. For the first term, we perform once again integrating by part and get:
\begin{align*}
\int_M f^{p+1} \nabla^k (\frac{D_{kij}}{f^2})
f^{i,j} = \frac{(n-2)(p+2)}{2(n-1)} \int_M f^{p-2}
|D|^2 -  \frac 12 \int_M f^p D\cdot C.
\end{align*}
For the second term we simply use Proposition \ref{D_ijk}:
\begin{align*}
\int_M f^p C_{kij} f^{i,j}f^k =  - \frac {n-2} {2(n-1)} \int_M f^{p} D\cdot C.
\end{align*}
And for the last term, we use the definition of Bach tensor and again perform more integrating by part:
\begin{align*}
\int_M f^{p-1} W_{kijl} f^kf^lf^{i,j}  =  (n-2) \int_M f^p B_{ij}f^if^j
- \frac{n-2}{2(n-1)} \int_M f^{p} D\cdot C.
\end{align*}
Combining all the three terms together, we get
\begin{align}\label{2-equ}
\int_M f^{p+1} B_{ij}f^{i,j} = -\frac {n-4}{2(n-1)(n-2)} \int_M f^p D\cdot C -(p+1) \int_M f^pB_{ij}f^if^j,
\end{align}
where we have applied Proposition \ref{int_id_1}. Going back and rewriting \eqref{1-equ} as follows:
\begin{align*}
\frac{1}{p+2} \int_M f^{p+2} B_{ij,}^{\quad ij} 
= (p+1)\int_M f^p  B_{ij}f^if^j + \int_M f^{p+1} B_{ij}f^{i,j},
\end{align*}
which implies, from \eqref{2-equ},
$$
\frac 1{p+2}\int_M f^{p+2}B_{ij,}^{\quad ij} = - \frac {n-4}{2(n-1)(n-2)}\int_M f^p D\cdot C.
$$
So the proof is complete.
\end{proof}

In particular, when $n=3$, we obtain

\begin{cor} Suppose that $(M^3, \ g)$ is a compact Riemannian manifold with no boundary admitting
a non-constant smooth solution to \eqref{equ:both}. Then, for any $p\geq 2$,
\begin{equation}\label{equ:c=d}
\int_M f^p C_{ijk,}^{\quad \ ijk} =  - \frac p4 \int_M f^p |C|^2.
\end{equation}
\end{cor}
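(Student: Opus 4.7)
The plan is to specialize the integral identity \eqref{int_id_2} from the preceding proposition to $n=3$ and then exploit the two simplifications peculiar to dimension three. Setting $n=3$ in the coefficient $-\tfrac{p(n-4)}{2(n-1)(n-2)}$ turns it into $\tfrac{p}{4}$, so \eqref{int_id_2} reads
$$\int_M f^p B_{ij,}^{\quad ij} = \frac{p}{4}\int_M f^{p-2}\, D\cdot C.$$

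For the right-hand side I would invoke the vanishing of the Weyl tensor in dimension three. By the defining relation \eqref{DCW}, this reduces the augmented Cotton tensor to $D_{ijk} = f^2 C_{ijk}$, hence $D\cdot C = f^2|C|^2$, and the right-hand side becomes $\tfrac{p}{4}\int_M f^p|C|^2$. For the left-hand side I would use the three-dimensional form of \eqref{bach-3}: again with $W\equiv 0$, it collapses to $B_{jk} = -C_{ijk,}^{\quad \ i}$, so taking the double divergence (which passes through the integral against $f^p$ via integration by parts on the closed manifold) gives $\int_M f^p B_{ij,}^{\quad ij} = -\int_M f^p C_{ijk,}^{\quad \ ijk}$. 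Substituting both expressions and rearranging produces \eqref{equ:c=d}.

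There is essentially no obstacle here, since all the substantive calculation has already been absorbed into Proposition~\ref{int_id_1} and into the proof of \eqref{int_id_2}. The corollary is purely a dimension-three bookkeeping consequence of those identities combined with the two structural facts $W\equiv 0$ and the coincidence (up to sign) of the Bach tensor with the divergence of the Cotton tensor.
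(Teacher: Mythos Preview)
Your argument is correct and is exactly the specialization the paper intends: set $n=3$ in \eqref{int_id_2}, then use $W\equiv 0$ to reduce $D$ to $f^2C$ on the right and $B_{jk}$ to $-C_{ijk,}^{\quad i}$ on the left. The parenthetical about integration by parts is unnecessary, since $B_{jk,}^{\quad jk}=-C_{ijk,}^{\quad ijk}$ already holds pointwise in dimension three, but this does not affect the validity of the proof.
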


Hence we have improved Theorem \ref{thm:>3} in dimension 3.

\begin{thm} Suppose that $(M^3, \ g)$ is a compact Riemannian manifold with no boundary with a static metric 
or CPE metric and non-constant function $f$. If $C_{ijk,}^{\quad \ ijk}$ vanishes identically, then the Cotton
tensor vanishes identically and therefore Theorem \ref{thm:>3} holds.
\end{thm}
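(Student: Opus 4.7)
The plan is to extract the conclusion directly from the integral identity \eqref{equ:c=d} just established. Setting $p=2$ (an even exponent, so that $f^{p}\ge 0$ pointwise) and using the hypothesis $C_{ijk,}^{\quad \ ijk}\equiv 0$ kills the left-hand side of \eqref{equ:c=d}, which forces
\[
\int_{M^3} f^2 |C|^2 = 0.
\]
Because the integrand is pointwise nonnegative, $f^2|C|^2\equiv 0$ on $M^3$; equivalently, $C_{ijk}(x)=0$ at every point where $f(x)\ne 0$.

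The key technical step is to upgrade this pointwise vanishing from the open set $\{f\ne 0\}$ to all of $M^3$. This amounts to checking that $\{f=0\}$ has empty interior. Both the static equation \eqref{equ:static-1} and the critical point equation \eqref{equ:cpe} express $f$ as a solution of a linear second-order equation with smooth coefficients; together with $g$ they constitute a determined elliptic system (in harmonic coordinates, say) whose solutions are real analytic. Hence if $f$ vanished on a nonempty open set it would vanish on all of $M^3$, contradicting the assumption that $f$ is non-constant. Therefore $\{f\ne 0\}$ is open and dense, and by continuity of the Cotton tensor, $C\equiv 0$ on $M^3$.

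With $C\equiv 0$ in hand, the conclusion is immediate from the work already done. In dimension three the Weyl tensor $W$ vanishes identically, so the formula \eqref{bach-3} for the Bach tensor reduces to $B_{jk}=-C_{ijk,}^{\quad i}$, which is then zero; hence $g$ is Bach flat. Since $M^3$ is compact, every level set $f^{-1}(c)$ is automatically compact, so Theorem \ref{thm:>3} applies and yields the local warped-product splitting near each regular level set. The main obstacle I foresee is making the unique-continuation/real-analyticity input fully rigorous for the CPE case (for vacuum static metrics it is classical, cf.\ \cite{F-M}); once that is secured, the remainder of the argument is merely a substitution into results already proved in the excerpt.
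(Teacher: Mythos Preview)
Your argument is correct and follows exactly the route the paper intends (the paper states the theorem as an immediate consequence of \eqref{equ:c=d} without spelling out the details). Your worry about the CPE case is misplaced: you do not need real analyticity of the coupled system $(g,f)$, because tracing \eqref{equ:cpe} gives the scalar linear elliptic equation $\Delta f=-\tfrac{R}{n-1}(f-1)$ with $R$ constant, and Aronszajn's unique continuation theorem already forces any solution vanishing on an open set to vanish identically (and then $f\equiv 0$ would contradict $\Delta f=\tfrac{R}{n-1}$ unless $R=0$, in which case $f$ is harmonic on a closed manifold, hence constant); the same works verbatim for the static equation via $\Delta f=-\tfrac{R}{n-1}f$.
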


More interestingly we have the improved version of Theorem \ref{classification}, which gives a
partial answer to the Fischer-Marsden's problem (cf. \cite{F-M}).

\begin{thm} Suppose that $(M^3, \ g)$ is a compact vacuum static space with
$C_{ijk,}^{\quad \ ijk}$ vanishing identically.  Then the vacuum static space must
be one of the following up to a finite quotient and appropriate
scaling,

(i) Flat space;

(ii) $S^n$;

(iii) $S^1 \times S^2$;

(iv) $S^1 \times_{r} S^2$ with warped metric $g= ds^2 + r^2(s)g_{S^2}$, where $r(s)$ is a periodic function 
given in Example 4 in \cite{Kobayashi}.
\end{thm}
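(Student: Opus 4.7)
The plan is to use the integral identity (\ref{equ:c=d}) to force the Cotton tensor to vanish identically, and then invoke the Bach-flat classification of Theorem \ref{classification} restricted to dimension three. If $f$ is a nonzero constant, the vacuum static equation (\ref{equ:vacuum-static-1}) becomes $(\mathrm{Ric} - \frac{R}{n-1}g)f = 0$, forcing $\mathrm{Ric} = \frac{R}{n-1}g$; tracing gives $R = 0$, so the metric is Ricci-flat, hence flat in dimension 3, giving case (i). From here on assume $f$ is non-constant.

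Apply (\ref{equ:c=d}) with $p = 2$. The hypothesis $C_{ijk,}^{\quad \ ijk} \equiv 0$ makes the left side zero, so
\begin{equation*}
\int_M f^2 |C|^2 = 0.
\end{equation*}
Since $f^2 |C|^2 \geq 0$, the Cotton tensor vanishes on the open set $\{f \ne 0\}$. To propagate this to all of $M$, I would show that $\{f \ne 0\}$ is dense. The scalar curvature $R$ of a vacuum static space is constant (a standard consequence of the contracted second Bianchi identity applied to (\ref{equ:vacuum-static-1})), so tracing yields the scalar elliptic equation $\Delta f + \frac{R}{n-1}f = 0$, to which Aronszajn's unique continuation theorem applies; since $f \not\equiv 0$, its zero set has empty interior. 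Continuity of $C$ then yields $C \equiv 0$ on all of $M$.

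In dimension 3 the Weyl tensor vanishes and (\ref{bach-3}) reduces to $B = -\mathrm{div}\, C$, so $C \equiv 0$ forces $B \equiv 0$. The compactness of $M$ makes each level set $f^{-1}(c)$ automatically compact, and Theorem \ref{thm:>3} then yields a local warped-product structure $g = ds^2 + r(s)^2 g_E$ over a surface $E$, which in dimension 2 must be of constant curvature. Running through Kobayashi's ODE classification (Theorem \ref{classification}) with the compactness constraint leaves exactly Kobayashi's Examples 2 and 4 with $E = S^2$ (up to finite quotient), yielding cases (ii)--(iv).

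The main obstacle I foresee is the density step: it requires one to first establish that the scalar curvature of a vacuum static space is constant, and then to invoke an appropriate unique-continuation result. If one prefers to avoid a direct appeal to Aronszajn, an alternative is to apply Theorem \ref{thm:>3} on each connected component of $\{f \ne 0\}$ and use Kobayashi's explicit ODE description to extend the warped-product structure across $\{f = 0\}$; this adds case analysis but is entirely elementary.
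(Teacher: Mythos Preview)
Your proposal is correct and follows essentially the same route as the paper: the paper presents this theorem without a separate proof, treating it as an immediate consequence of the preceding result (that the integral identity \eqref{equ:c=d} forces $C\equiv 0$ once the complete divergence vanishes) combined with the compact cases of the Bach-flat classification in Theorem~\ref{classification}. Your explicit handling of the density of $\{f\neq 0\}$ via constancy of $R$ and unique continuation fills in a step the paper leaves tacit, but the overall strategy is identical.
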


Similarly we have the improved version of Theorem \ref{besse-conj-1} as follows:

\begin{thm}  Conjecture \ref{besse} holds for compact 3-manifold $(M^3, \ g)$ with no boundary satisfying 
$C_{ijk,}^{\quad \ ijk} = 0$.
\end{thm}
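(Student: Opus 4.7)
The plan is to combine the integral identity from the preceding corollary with the already-established locally conformally flat case of the Besse conjecture. First, observe that if the CPE function $f$ is constant, then the CPE equation forces $g$ to be Einstein and the conjecture is trivial; so we may assume $f$ is non-constant throughout. Taking $p=2$ in the identity
\[
\int_{M^3} f^p \mathcal{C} \;=\; -\frac{p}{4}\int_{M^3} f^p |C|^2
\]
and invoking the hypothesis $\mathcal{C}\equiv 0$, we obtain $\int_{M^3} f^2 |C|^2 = 0$, and hence $f\,|C|\equiv 0$ pointwise.

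Next, we deduce that the Cotton tensor vanishes identically on $M^3$. Because $f$ satisfies a second order elliptic equation with smooth coefficients and is non-constant, unique continuation implies that the nodal set $\{f=0\}$ has empty interior. On the open dense complement $\{f\neq 0\}$ the previous step yields $|C|=0$, and continuity of $C$ extends this to all of $M^3$.

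Finally, in dimension three the Weyl tensor vanishes identically, and the vanishing of the Cotton tensor $C$ is precisely the local conformal flatness condition. Applying Lafontaine's classification \cite{Lafontaine} of locally conformally flat CPE manifolds, we conclude that $(M^3, g)$ must be isometric to a round sphere (up to scaling and a finite quotient), which verifies Conjecture \ref{besse} in this setting. As an alternative to Lafontaine's theorem, one could instead appeal to \cite{Yun_Huang}: since $W\equiv 0$ in dimension three, $C\equiv 0$ is equivalent to the harmonicity of the Riemann curvature, and the Chang--Hwang--Yun result then closes the argument.

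I expect the main, though modest, obstacle to be the passage from $f^2|C|^2=0$ to $C\equiv 0$, which rests on the unique continuation property for solutions of the CPE equation to rule out $f$ vanishing on an open set. Once this is in hand, the remainder of the proof reduces to a direct appeal to the existing LCF or harmonic-curvature classification, for which the integral identity (\ref{equ:c=d}) supplies exactly the required hypothesis.
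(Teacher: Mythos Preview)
Your proposal is correct and follows essentially the same route as the paper: use the integral identity \eqref{equ:c=d} with even $p$ to force $f^2|C|^2\equiv 0$, deduce $C\equiv 0$, and then invoke the known locally conformally flat (equivalently, harmonic-curvature) case of the Besse conjecture. The paper presents this only implicitly (``Similarly we have the improved version of Theorem~\ref{besse-conj-1}\ldots''), so your explicit handling of the nodal set via unique continuation is a welcome bit of added rigor rather than a departure in strategy.
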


\paragraph{\textbf{Acknowledgement}}
The second author would like to thank Dr. David DeConde for his helpful discussion and valuable suggestions.

\bibliographystyle{amsplain}

\end{document}